\documentclass[12pt, reqno, a4paper]{amsart}

\usepackage{amssymb}
\usepackage{hyperref}
\usepackage{lscape} 

\usepackage[english]{babel}

\oddsidemargin=0in%
\evensidemargin=0in%
\topmargin=-30pt%
\textheight=700pt%
\textwidth=6.5in%

\sloppy

\theoremstyle{plain}
\newtheorem{proposition}{Proposition}
\newtheorem*{corollary}{Corollary}
\newtheorem{lemma}{Lemma}
\newtheorem{theorem}{Theorem}
\newtheorem*{conjecture}{Conjecture}

\theoremstyle{remark}
\newtheorem*{remark}{Remark}

\theoremstyle{definition}

\newtheorem{example}{Example}

\DeclareMathOperator{\Id}{Id}

\DeclareMathOperator{\id}{id}

\DeclareMathOperator{\UT}{UT}

\DeclareMathOperator{\supp}{supp}

\DeclareMathOperator{\End}{End}

\DeclareMathOperator{\sign}{sign}

\DeclareMathOperator{\op}{op}

\DeclareMathOperator{\PIexp}{PIexp}

\begin{document}

\title[Semigroup graded algebras]{Semigroup graded algebras and codimension growth of graded polynomial identities}

\author{A.\,S.~Gordienko}
\address{Vrije Universiteit Brussel, Belgium}
\email{alexey.gordienko@vub.ac.be} 

\keywords{Associative algebra, Jacobson radical, polynomial identity, grading, semigroup, zero band, $H$-(co)module algebra, bialgebra, codimension, Amitsur's conjecture.}

\begin{abstract}
We show that if $T$ is any of four semigroups of two elements that are not groups, there exists
a finite dimensional associative $T$-graded algebra over a field of characteristic $0$
such that the codimensions of its graded polynomial identities have a non-integer exponent of growth.
In particular, we provide an example of a finite dimensional graded-simple semigroup graded algebra
over an algebraically closed field of characteristic $0$ with a non-integer graded PI-exponent,
which is strictly less than the dimension of the algebra.
However, if $T$ is a left or right zero band and the $T$-graded algebra is unital,
or $T$ is a cancellative semigroup, then the $T$-graded algebra satisfies the graded analog of Amitsur's conjecture, i.e. there exists an integer graded PI-exponent. Moreover, in the first case it turns out that
the ordinary and the graded PI-exponents coincide. In addition, we consider related problems on the structure of semigroup graded algebras.
\end{abstract}

\subjclass[2010]{Primary 16W50; Secondary 16R10, 16R50, 16T05, 16T15.}

\thanks{Supported by Fonds voor Wetenschappelijk Onderzoek~--- Vlaanderen Pegasus Marie Curie post doctoral fellowship (Belgium) and RFBR grant 13-01-00234a (Russia).}

\maketitle

The notion of a semigroup graded algebra is a natural generalization of the notion of a group graded algebra, however the first notion is much less restricting: e.g. if an algebra is the direct sum of its left ideals or if an algebra is the direct sum of a subalgebra and an ideal, this can be expressed in the language of semigroup gradings.

In 2010--2011
E.~Aljadeff,  A.~Giambruno, and D.~La~Mattina~\cite{AljaGia, AljaGiaLa, GiaLa}
 proved that if an associative PI-algebra is graded by a finite group, then there exists an integer exponent of codimensions of its graded polynomial identities, i.e. the graded analog of Amitsur's conjecture holds. In~\cite[Theorem~1]{ASGordienko5} and~\cite[Theorem~3]{ASGordienko9} the author proved the same for finite dimensional associative and Lie algebras graded by any groups. In~\cite{KelarevPI} A.\,V.~Kelarev studied semigroup graded PI-algebras.
 
The next question that naturally arises in this investigation is as to whether the results on codimension growth of graded polynomial identities hold for semigroup graded associative algebras.
 
In the associative case the main properties that we use in order to prove the graded analog of Amitsur's conjecture (Theorem~\ref{TheoremMainTGrAssoc}) are the gradedness (or homogeneity) of the Jacobson radical and the graded version of the Wedderburn~--- Artin theorem.
We consider these properties in Sections~\ref{SectionSemigroupJacobson} and~\ref{SectionSemigroupWedderburn} and obtain the graded analog of Amitsur's conjecture for algebras
graded by cancellative semigroups (Theorem~\ref{TheoremTCancelAmitsur})
and unital algebras graded by left or right zero bands (Theorem~\ref{TheoremTIdemAmitsur}).
In the first case we use Kelarev and Plant's result on gradedness of Jacobson radicals
in algebras graded by cancellative groupoids~\cite[Corollary 4.1]{KelarevBook}.

Until now, there were no examples known of an associative algebra with a non-integer PI-exponent of any kind (graded, Hopf, etc.). In 1999 S.\,P.~Mishchenko and M.\,V.~Zaicev gave an example of an
infinite dimensional Lie algebra with a non-integer PI-exponent~\cite{ZaiMishchFracPI}
(see the proof in~\cite{VerZaiMishch}).
Here we use their ideas to present a finite dimensional semigroup graded associative algebra with a non-integer exponent of codimension growth of graded polynomial identities (Theorems~\ref{TheoremT1GradFractPI}--\ref{TheoremT3GradFractPI}) for each of four semigroups of two elements that are not groups.

The PI-exponent of a finite dimensional graded-simple group graded Lie or associative algebra over an algebraically closed field of characteristic $0$ equals the dimension of the algebra~\cite[Example~12]{ASGordienko5} and~\cite[Theorem~4]{ASGordienko9}. In Theorem~\ref{TheoremT3GradFractPI} we provide an example of a finite dimensional graded-simple semigroup graded algebra
over an algebraically closed field of characteristic $0$ with a non-integer graded PI-exponent,
which is strictly less than the dimension of the algebra.

\section{Semigroups of two elements}

First we describe all the five non-isomorphic semigroups of two elements.

Let $T_1 =\lbrace 0,1\rbrace$ be the multiplicative semigroup of the field $\mathbb Z_2$.

Let $T_2 =\lbrace 0,v\rbrace$ be the semigroup defined by relations $v^2 =0^2= 0\cdot v=v\cdot 0=0$.

Recall that a semigroup $T$ is a \textit{left zero band}
if $t_1 t_2 = t_1$ for every $t_1, t_2 \in T$
and a \textit{right zero band}
if $t_1 t_2 = t_2$ for every $t_1, t_2 \in T$.

Let $T_3$ be the right zero band of two elements.

\begin{proposition}
Let $T$ be a semigroup that consists of two elements. Then $T$ is isomorphic to one of semigroups
from the list $\lbrace T_1, T_2, T_3, T_3^{\,\op}, (\mathbb Z_2,+)\rbrace$ and each two semigroups from this list are non-isomorphic. (Here  $T_3^{\,\op}$ is anti-isomorphic to $T_3$.)
\end{proposition}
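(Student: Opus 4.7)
The strategy is direct enumeration. Writing $T = \{a, b\}$, the multiplication table is determined by the four entries $a^2, ab, ba, b^2 \in T$. Since every finite semigroup contains an idempotent, I may assume after relabelling that $a^2 = a$, which leaves eight candidate tables to examine. I would then split the analysis on the value of $b^2$.

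In the case $b^2 = b$, both elements are idempotent. Each of the four choices of the pair $(ab, ba) \in T \times T$ is associative by a short direct verification. The pairs $(a, b)$ and $(b, a)$ give the left and right zero bands, i.e.\ $T_3^{\op}$ and $T_3$ respectively. In each of the remaining two cases $(a, a)$ and $(b, b)$, one element acts as a two-sided zero absorbing a non-zero idempotent, and both tables are easily seen to be isomorphic to $T_1$.

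In the case $b^2 = a$, the associativity relation $(bb)b = b(bb)$ forces $ab = ba$, and with this constraint the two remaining tables are associative. The choice $ab = ba = a$ makes every product equal to $a$, giving $T_2$ (with zero $a$ and nilpotent $b$). The choice $ab = ba = b$ makes $a$ a two-sided identity with $b^2 = a$, so $T$ is a group of order two and hence isomorphic to $(\mathbb{Z}_2, +)$.

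For pairwise non-isomorphism I would argue via invariants: $(\mathbb{Z}_2, +)$ is the only group in the list; $T_1$ and $T_2$ are commutative while $T_3$ and $T_3^{\op}$ are not; and $T_1$ contains a non-zero idempotent while $T_2$ does not. Finally, to separate $T_3$ from $T_3^{\op}$, any would-be isomorphism $\varphi : T_3 \to T_3^{\op}$ with $T_3 = \{a, b\}$ would have to satisfy $\varphi(b) = \varphi(ab) = \varphi(a)\varphi(b) = \varphi(a)$ (computing the last product in $T_3^{\op}$), a contradiction. The only real obstacle in the argument is organizational: making sure that every multiplication table on a two-element set is examined exactly once.
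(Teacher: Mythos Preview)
Your proof is correct and follows essentially the same case analysis as the paper: both reduce to the dichotomy of whether both elements are idempotent (your case $b^2=b$, the paper's case $T\neq\{a,a^2\}$) or not, and then handle the subcases identically. Your argument is in fact slightly more complete, since you explicitly verify the pairwise non-isomorphism of the five semigroups, which the paper's proof leaves implicit.
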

\begin{proof}
First, consider the case when $T = \lbrace a, a^2 \rbrace$ for some $a\in T$.
Then if $a^3=a$, we have $a^4=a^2$, $a^2$ is the identity element of $T$, and
$T \cong (\mathbb Z_2,+)$. If $a^3=a^2$, then $a^4=a^3=a^2$, $a^2$ is the zero element of $T$,
and $T \cong T_2$.

Now consider the case when $T \ne \lbrace a, a^2 \rbrace$ for all $a\in T$.
Then $T = \lbrace a, b \rbrace$, $a^2=a$, $b^2=b$. If $ab=ba$, then $T \cong T_1$.
If $ab\ne ba$, then $T\cong T_3$ for $ab = b$, $ba=a$, and $T\cong T_3^{\,\mathrm{op}}$
for $ab = a$, $ba=b$.
\end{proof}

\section{Gradedness of the Jacobson radical}\label{SectionSemigroupJacobson}

Let $T$ be a semigroup. An algebra $A$ is \textit{$T$-graded} if $A=\bigoplus_{t\in T} A^{(t)}$ (direct sum of subspaces) and $A^{(h)}A^{(t)} \subseteq A^{(ht)}$ for all $h,t\in T$. A subspace $V$ of $A$ is \textit{graded}
(or \textit{homogeneous}) if $V=\bigoplus_{t\in T} V \cap A^{(t)}$.

In particular, $T_1$-graded algebras are exactly the algebras with a fixed decomposition
into the direct sum of a two-sided ideal and a subalgebra. If $T$ is a right zero band, then
$T$-graded algebras are algebras with a fixed decomposition into the direct sum of left ideals
indexed by the elements of $T$.

It is known~\cite[Example 4.2]{KelarevBook}, that the Jacobson radical is not necessarily graded.
(See also the survey of positive results in~\cite[Section 4.4]{KelarevBook}.)
Here we provide examples and results related to semigroups of two elements and left and right zero bands.

Denote by $M_k(F)$ is the full $k\times k$ matrix algebra over a field $F$
and $\UT_k(F)$ is the algebra of upper triangular $k\times k$ matrices. In $M_k(F)$
we fix the basis of matrix units $e_{i\ell}$, $1\leqslant i,\ell \leqslant k$.

\begin{example}\label{ExampleT1} 
Let $A = M_k(F)\oplus \UT_k(F)$ (direct sum of ideals) where $F$ is a field, $k\geqslant 2$. Define a $T_1$-grading on $A$ by $A^{(0)}=(M_k(F),0)$, $A^{(1)}=\lbrace (\varphi(a), a) \mid a \in \UT_k(F) \rbrace$ where $\varphi \colon \UT_k(F) \hookrightarrow M_k(F)$ is the natural embedding. Then $$J(A) = \lbrace (0,e_{ij}) \mid 1 \leqslant i < j \leqslant k\rbrace \subset (0,\UT_k(F)),$$ $J(A) \cap A^{(0)} = J(A) \cap A^{(1)}=0$,
and $J(A)$ is not a graded ideal.
\end{example}

\begin{example}\label{ExampleT2}
Let $A = M_k(F)\oplus V$ (direct sum of ideals) where $V\cong M_k(F)$ as a vector space, $k\in \mathbb N$, $V^2 = 0$, and $F$ is a field. Denote by $\varphi \colon V \mathrel{\widetilde{\rightarrow}} M_k(F)$
the corresponding linear isomorphism.
 Define a $T_2$-grading on $A$ by $A^{(0)}=(M_k(F),0)$, $A^{(v)}=\lbrace (\varphi(a), a) \mid a \in V \rbrace$. Then $$J(A) = (0,V),\ J(A) \cap A^{(0)} = J(A) \cap A^{(v)}=0,$$
and $J(A)$ is not a graded ideal.
\end{example}

\begin{example} \label{ExampleT3}
Let $A = M_k(F)\oplus V$ (direct sum of left ideals) where $V\cong M_k(F)$ as a left
$M_k(F)$-module, $k\in \mathbb N$, $V^2=V M_k(F) = 0$, and $F$ is a field. Denote by $\varphi \colon V \mathrel{\widetilde{\rightarrow}} M_k(F)$
the corresponding isomorphism. Define a $T_3$-grading on $A$ by $A^{(e_1)}=(M_k(F),0)$, $$A^{(e_2)}=\lbrace (\varphi(a), a) \mid a \in V \rbrace.$$ Then $$J(A) = (0,V),\ J(A) \cap A^{(e_1)} = J(A) \cap A^{(e_2)}=0,$$
and $J(A)$ is not a graded ideal.
\end{example}

\begin{remark}
One can use the opposite example to show that the Jacobson radical is not necessarily $T_3^{\,\mathrm{op}}$-graded.
\end{remark}

However, if an algebra is unital and $T_3$- or $T_3^{\,\mathrm{op}}$-graded,
then the Jacobson radical is graded. In fact, a more general result holds.

\begin{proposition}\label{PropositionTIdemGradedIdeals}
Let $A$ be a $T$-graded associative algebra with $1$
over a field $F$ for some left or right zero band $T$. Then every ideal of $A$ is graded.
\end{proposition}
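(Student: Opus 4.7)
The plan is to exploit the special structure that a $1$ forces on the grading when $T$ is a zero band, namely that the homogeneous components of $1$ are pairwise orthogonal idempotents which act as projections onto the graded components of $A$. Once that is set up, closure of an ideal under one-sided multiplication will immediately give gradedness.

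I would first treat the left zero band case. Write $1 = \sum_{t \in T} 1^{(t)}$ with $1^{(t)} \in A^{(t)}$. Because $st = s$ for all $s,t \in T$, we have $A^{(s)} A^{(t)} \subseteq A^{(s)}$ for every $s,t$, and in particular $1^{(t)} a^{(s)} \in A^{(t)}$ for any $a^{(s)} \in A^{(s)}$. On the other hand, $a^{(s)} = 1 \cdot a^{(s)} = \sum_t 1^{(t)} a^{(s)}$. Since the sum of homogeneous components representing $a^{(s)} \in A^{(s)}$ is unique, comparing components forces $1^{(t)} a^{(s)} = \delta_{ts}\, a^{(s)}$. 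Applied to $a^{(s)} = 1^{(r)}$ this gives $1^{(t)} 1^{(r)} = \delta_{tr}\, 1^{(r)}$, so the $1^{(t)}$ are pairwise orthogonal idempotents summing to $1$.

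Now let $I$ be any (two-sided) ideal of $A$ and let $x = \sum_t x^{(t)} \in I$ with $x^{(t)} \in A^{(t)}$. Multiplying on the left by $1^{(s)}$ and using the projection identity from the previous paragraph, $1^{(s)} x = \sum_t 1^{(s)} x^{(t)} = x^{(s)}$. Since $I$ is a left ideal, $x^{(s)} = 1^{(s)} x \in I$ for every $s$, which is exactly gradedness of $I$. The right zero band case is verbatim symmetric: there one obtains $a^{(s)} 1^{(t)} = \delta_{st}\, a^{(s)}$ from the identity $a^{(s)} = a^{(s)} \cdot 1$, and projects via right multiplication by $1^{(s)}$, using that $I$ is a right ideal.

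There is no real obstacle here; the entire argument hinges on the \emph{uniqueness} of the decomposition into homogeneous components, which is what forces the components of $1$ to behave as orthogonal projections. Note that the proof uses two-sidedness of $I$ only to the extent that one side of multiplication is needed (left in the first case, right in the second); in fact the argument shows the stronger statement that every left ideal is graded in the left zero band case, and every right ideal is graded in the right zero band case.
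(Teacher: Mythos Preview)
Your proof is correct and follows essentially the same approach as the paper: decompose $1$ into homogeneous components and multiply elements of $I$ by these components to land in $I \cap A^{(t)}$. The paper's version is marginally more streamlined in that it does not pause to verify the orthogonal-idempotent identity $1^{(t)} a^{(s)} = \delta_{ts} a^{(s)}$ (it simply notes $a e_t \in I \cap A^{(t)}$ and $a = \sum_t a e_t$), but your extra observation that only one-sidedness of $I$ is needed is a genuine strengthening the paper does not record.
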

\begin{proof}
Consider the case when $t_1 t_2 = t_2$ for every $t_1, t_2 \in T$. (Another case is considered analogously.)
Then all $A^{(t)}$, $t\in T$, are left ideals of $A$ and $1 = \sum_{t\in T}e_t$ for some $e_t \in A^{(t)}$.
Let $I$ be an ideal. Then for every $a\in I$ we have $a=\sum_{t\in T}a e_t$
where $ae_t\in I \cap A^{(t)}$ for every $t\in T$. Hence $I=\bigoplus_{t\in T} I \cap A^{(t)}$
is a graded ideal. 
\end{proof}

\section{Graded analogs of the Wedderburn theorems and $T$-graded simplicity}\label{SectionSemigroupWedderburn}

Now we study whether the graded analogs of the Wedderburn theorems hold for $T$-graded algebras
where $T$ is a semigroup. Recall that a $T$-graded algebra $A$ is a \textit{graded-simple algebra}
if $A^2\ne 0$ and $A$ has no graded ideals other than $A$ and $0$.

\begin{example}\label{ExampleT1Wedderburn}
Let $B=M_k(F)\oplus M_k(F)$
(direct sum of ideals), $k\in \mathbb N$, where $F$ is a field.
Define a $T_1$-grading on $A$ by $A^{(0)}=(M_k(F),0)$, $A^{(1)}=\lbrace (a, a) \mid a \in M_k(F) \rbrace$.
Then $B$ cannot be presented as the direct sum of $T_1$-graded ideals that are $T_1$-graded-simple algebras,
i.e. the $T_1$-graded analog of the Wedderburn~--- Artin theorem does not hold.
\end{example}
\begin{proof} Note that the semisimple algebra $B$ has only four ideals: $0$, $B$, $(0,M_k(F))$,
and $(M_k(F),0)$. Three of them are $T_1$-graded, namely, $0$, $B$,
and $(M_k(F),0)$, and only $(M_k(F),0)$ is a $T_1$-graded-simple algebra.
\end{proof}
\begin{remark}
Since $B^{(0)}$ is always a graded ideal,
every $T_1$-graded-simple algebra $B$ has the trivial grading, i.e. $B=B^{(0)}$.
Therefore, every $T_1$-graded-simple algebra is simple as an ordinary algebra.
\end{remark}

\begin{proposition}\label{PropositionT2Wedderburn}
Let $B$ be a finite dimensional associative $T_2$-graded semisimple algebra over a field $F$.
Then $B=B^{(0)}$, and by the ordinary Wedderburn~--- Artin theorem, $B$ is
the direct sum of $T_2$-graded ideals that are simple algebras (with the trivial grading).
Moreover, every  $T_2$-graded-simple algebra is simple as an ordinary algebra.
In particular, the $T_2$-graded analog of the Wedderburn~--- Artin theorem holds.
\end{proposition}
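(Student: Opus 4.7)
The key observation is that $T_2$ is a ``zero semigroup'': every product in $T_2$ equals $0$. Consequently, for any $T_2$-graded algebra $B$ and all $s,t\in T_2$, we have $B^{(s)} B^{(t)} \subseteq B^{(st)} = B^{(0)}$, so $B\cdot B \subseteq B^{(0)}$. This single fact drives the entire proposition.

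First I would establish $B=B^{(0)}$. Being finite dimensional, associative, and semisimple, $B$ admits the ordinary Wedderburn--Artin decomposition as a direct sum of matrix algebras over division rings, and in particular has an identity $1$. Hence $B = 1\cdot B \subseteq B^2 \subseteq B^{(0)}$, forcing $B^{(v)}=0$, so the grading is trivial. Then ordinary Wedderburn--Artin writes $B = B_1 \oplus \cdots \oplus B_r$ with each $B_i$ simple; since each $B_i \subseteq B = B^{(0)}$, each is automatically a $T_2$-graded ideal (with trivial grading), which is the desired graded Wedderburn--Artin conclusion.

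For the ``moreover'' assertion I would argue directly without assuming semisimplicity: for \emph{any} $T_2$-graded algebra $B$, the same computation $B\cdot B \subseteq B^{(0)}$ shows that $B^{(0)}$ is a two-sided ideal, and it is graded by construction. If $B$ is $T_2$-graded-simple, the dichotomy $B^{(0)} = 0$ or $B^{(0)} = B$ applies. The case $B^{(0)}=0$ gives $B = B^{(v)}$ and then $B^2 \subseteq B^{(0)} = 0$, contradicting the defining condition $B^2 \neq 0$ of graded-simplicity. Hence $B^{(0)}=B$, the grading is trivial, every ideal is automatically graded, and graded-simplicity coincides with ordinary simplicity.

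There is essentially no obstacle: the whole proposition rests on the trivial multiplication table of $T_2$ together with the existence of an identity for finite dimensional semisimple associative algebras.
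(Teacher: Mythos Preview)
Your proof is correct and is actually slightly more direct than the paper's. Both arguments hinge on the same structural fact---that $B^{(0)}$ is a graded ideal containing $B^2$---but they exploit it differently. The paper argues by contradiction: assuming $B\ne B^{(0)}$, it uses Wedderburn--Artin to split off a semisimple complement $I$ of $B^{(0)}$, then notes $I\cong B/B^{(0)}$ has square zero (since $(B^{(v)})^2\subseteq B^{(0)}$), contradicting semisimplicity. You instead invoke the identity element of a finite dimensional semisimple algebra to get $B=1\cdot B\subseteq B^2\subseteq B^{(0)}$ in one stroke, avoiding the auxiliary complement altogether. Your treatment of the ``moreover'' clause is also more explicit than the paper's, which leaves that part essentially unargued; your dichotomy argument on the graded ideal $B^{(0)}$ is exactly the right way to handle it.
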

\begin{proof}
Suppose $B \ne B^{(0)}$.
Note that $B^{(0)}$ is an ideal and, by the ordinary Wedderburn~--- Artin theorem,
$B=B^{(0)} \oplus I$ for some semisimple ideal $I$ of $B$. However, $(B/B^{(0)})^2=0$
since $(B^{(1)})^2 \subseteq B^{(0)}$, and $I \cong B/B^{(0)}$ cannot be semisimple.
Hence $B=B^{(0)}$, the algebra $B$ has the trivial $T_2$-grading, and we can apply to $B$ the ordinary Wedderburn~--- Artin theorem.
\end{proof}

\begin{proposition}\label{PropositionTIdemWedderburn}
Let $B$ be a finite dimensional associative $T$-graded semisimple algebra over a field $F$
for some left or right zero band $T$.
Then $B$ is the direct sum of $T$-graded ideals that are simple algebras.
In particular, the $T$-graded analog of the Wedderburn~--- Artin theorem holds
and every finite dimensional semisimple $T$-graded-simple algebra is simple as an ordinary algebra.
\end{proposition}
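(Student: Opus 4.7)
The plan is to reduce everything to Proposition~\ref{PropositionTIdemGradedIdeals} together with the ordinary Wedderburn~--- Artin theorem. The first observation I would make is that a finite dimensional associative semisimple algebra over a field automatically has a unit (being a direct sum of matrix algebras over division rings), so $B$ falls under the hypothesis of Proposition~\ref{PropositionTIdemGradedIdeals}. That proposition then tells us that \emph{every} two-sided ideal of $B$ is $T$-graded, which is the key leverage for the whole argument.

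Given this, I would apply the ordinary Wedderburn~--- Artin theorem to write $B = \bigoplus_{i=1}^{n} B_i$ as a direct sum of simple ideals $B_i$. By the previous paragraph, each $B_i$ is automatically a $T$-graded ideal of $B$. Each $B_i$ is simple as an ordinary algebra by construction, so in particular it has no nonzero proper graded ideals (being a graded subalgebra of $B$, a graded ideal of $B_i$ is in fact an ideal of $B_i$ because the $B_i$ are direct summands of $B$ as ideals). This gives the desired $T$-graded Wedderburn~--- Artin decomposition.

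For the final assertion, suppose $B$ is a finite dimensional semisimple $T$-graded-simple algebra. The decomposition $B = \bigoplus_i B_i$ just produced consists of $T$-graded ideals. By graded-simplicity and the fact that $B^2 \ne 0$, exactly one $B_i$ is nonzero, so $B = B_i$ is simple as an ordinary algebra.

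I do not anticipate a real obstacle here: the only thing that needs any care is the implicit use of a unit, and that is immediate from semisimplicity in finite dimension. The case distinction between left and right zero bands is already absorbed into Proposition~\ref{PropositionTIdemGradedIdeals}, which is stated for both simultaneously.
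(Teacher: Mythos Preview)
Your proof is correct and follows essentially the same route as the paper: apply the ordinary Wedderburn--Artin theorem and then invoke Proposition~\ref{PropositionTIdemGradedIdeals} to see that each simple summand is automatically graded. You in fact supply two details the paper leaves implicit --- that $B$ is unital (needed for Proposition~\ref{PropositionTIdemGradedIdeals}) and the short argument that graded-simplicity forces the decomposition to have a single summand --- but the underlying strategy is identical.
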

\begin{proof} 
Consider the case when $T$ is a right zero band. The other case is considered analogously.
By the ordinary Wedderburn~--- Artin theorem, $$B=B_1\oplus B_2 \oplus \ldots \oplus B_s \text{ (direct sum of ideals)}$$ for some simple algebras $B_i$.
By Proposition~\ref{PropositionTIdemGradedIdeals},
each $B_i$ is a $T$-graded ideal. Now the theorem follows.
\end{proof}
However there exist non-semisimple $T_3$-graded-simple algebras. (See Proposition~\ref{PropositionAT3GrSimple} below.)

By Proposition~\ref{PropositionTIdemGradedIdeals}, if a $T$-graded algebra, where $T$ is a left or right zero band, contains unity, the
its Jacobson radical is graded (as well as all the other ideals).
Therefore, one may ask whether the $T$-graded analog of the Wedderburn~--- Mal'cev theorem
holds for such algebras. In fact, the answer is true.

\begin{theorem}\label{TheoremTIdemGradedWeddMalcev}
Let $A$ be a finite dimensional associative $T$-graded algebra with unity over a field $F$ where
$T$ is a left or right zero band and $A/J(A)$ is a separable algebra. (E.g., $F$ is a perfect field.)
Then there exists a graded maximal semisimple subalgebra $B$ such that $A = B \oplus J$ (direct sum of graded spaces) where $J := J(A)$.
\end{theorem}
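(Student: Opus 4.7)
The plan is to apply the ordinary Wedderburn~--- Mal'cev theorem and then conjugate the resulting semisimple complement by an element of $1+J$ so that it contains all of the natural homogeneous idempotents associated with the grading. I will write out the argument for the case when $T$ is a right zero band; the left case is entirely analogous.

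First I would exploit the structure imposed by the unit. Decompose $1 = \sum_{t \in T} e_t$ along the grading, with $e_t \in A^{(t)}$. Since $e_t e_s \in A^{(ts)} = A^{(s)}$ for the right zero band, the identity $e_t = e_t \cdot 1 = \sum_s e_t e_s$ together with uniqueness of the homogeneous decomposition forces $e_t e_s = \delta_{s,t}\, e_t$. Hence $\{e_t\}_{t\in T}$ is a complete system of orthogonal idempotents, $A^{(t)} = A e_t$, and right multiplication by $e_t$ realises the projection $A \to A^{(t)}$. This yields the criterion: \emph{a unital subalgebra $B\subseteq A$ is graded if and only if $e_t \in B$ for every $t\in T$}, since then $b = \sum_t b e_t$ is automatically a decomposition of $b\in B$ into homogeneous pieces lying in $B$.

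Because $A/J$ is separable, the classical Wedderburn~--- Mal'cev theorem supplies a unital semisimple subalgebra $B' \subseteq A$ with $A = B' \oplus J$. Transporting the orthogonal idempotents $\bar e_t \in A/J$ through the canonical algebra isomorphism $B' \to A/J$ yields orthogonal idempotents $e'_t \in B'$ with $e'_t - e_t \in J$. The decisive step is the classical simultaneous-lifting lemma: because $J$ is nilpotent (since $A$ is finite-dimensional), any two complete systems of orthogonal idempotents in $A$ that are congruent modulo $J$ are conjugate by a single element of $1+J$. Applying this to $\{e_t\}$ and $\{e'_t\}$ produces $u\in 1+J$ with $u e'_t u^{-1} = e_t$ for every $t$. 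Setting $B := u B' u^{-1}$ then gives a semisimple subalgebra with $A = B \oplus J$ (since $J$ is stable under conjugation by $u$) containing every $e_t$; by the criterion above $B$ is graded, and its dimension $\dim(A/J)$ makes it a maximal semisimple subalgebra. The only nontrivial input is the simultaneous-lifting lemma, which is a standard result proved by a short induction on the nilpotency index of $J$ using that $1 + J$ is a group; everything else is bookkeeping in the graded structure.
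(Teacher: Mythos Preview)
Your argument is correct and shares the paper's key observation: the homogeneous components $e_t$ of $1_A$ form a complete system of orthogonal idempotents, and a unital subalgebra is graded precisely when it contains all of them, so it suffices to conjugate the Wedderburn--Mal'cev complement $B'$ by an element of $1+J$ carrying the lifts $e'_t\in B'$ to the $e_t$. The difference lies in how this conjugating element is produced. The paper first reduces to the case $J^2=0$ by an outer induction on $\dim A$ (passing through $A/J^2$), and in that base case builds the conjugating element explicitly, one idempotent at a time, using the unipotents $1+e_{t_{k+1}}j-je_{t_{k+1}}$. You instead invoke the standard simultaneous-conjugacy lemma for complete orthogonal idempotent systems modulo a nilpotent ideal in one stroke. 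Your route is shorter and more conceptual, at the price of citing an external lemma; the paper's route is fully self-contained and gives the conjugating element concretely.
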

\begin{proof} Without loss of generality, we may assume that $T$ is a right zero band.
First we consider the case $J^2=0$.

Note that $1_A = \sum_{t\in T} e_t$ for some $e_t\in A^{(t)}$. Moreover, since $1_A e_t=
\sum_{r\in T} e_r e_t$ and $e_r e_t \in A^{(t)}$ for every $t\in T$, we have $e_t^2 = e_t$
and $e_r e_t = 0$ for all $r\ne t$.

Using the ordinary Wedderburn~--- Mal'cev theorem we choose a maximal semisimple subalgebra $B$ such that $A = B \oplus J$ (direct sum of subspaces).
Let $\pi \colon A \twoheadrightarrow A/J$ be the natural projection
which is a graded map since $J$ is graded. Let $\varphi \colon A/J \hookrightarrow A$
be a homomorphic embedding such that $\varphi(A/J)=B$ and $\pi\varphi = \id_{A/J}$.
Note that $\pi(1_A)=\sum_{t\in T} \pi(e_t)$ is the unity of $A/J$.
In addition, $1_A = 1_B = \varphi\pi(1_A)$.

Let $T=\lbrace t_1, \ldots, t_s\rbrace$.
If $\varphi\pi(e_{t_i})=e_{t_i}$ for all $1\leqslant i\leqslant s$,
then $B{e_{t_i}} \subseteq B$ for all $1\leqslant i\leqslant s$ and $B=\bigoplus_{t\in T} B{e_t}$ is a graded subalgebra and the theorem is proved.

Suppose $\varphi\pi(e_{t_i}) \ne e_{t_i}$ for at least one $1\leqslant i\leqslant s$.
Choose $0\leqslant k \leqslant s-1$ such that $\varphi\pi(e_{t_i})=e_{t_i}$
for all $1\leqslant i \leqslant k$
and $\varphi\pi(e_{t_{k+1}}) \ne e_{t_{k+1}}$.
 Note that $\pi(\varphi\pi(e_{t_{k+1}})-e_{t_{k+1}})=0$ and $\varphi\pi(e_{t_{k+1}}) = e_{t_{k+1}}+j$ for some $j\in J$. In addition, $j e_{t_i} = (\varphi\pi(e_{t_{k+1}})-e_{t_{k+1}})e_{t_i}
 = \varphi\pi(e_{t_{k+1}} e_{t_i})-e_{t_{k+1}} e_{t_i} = 0$ for all $1\leqslant i\leqslant k$.
 Analogously, $e_{t_i}j=0$ for all $1\leqslant i\leqslant k$.
 Moreover, since $(e_{t_{k+1}} +j)^2=e_{t_{k+1}} +j$, we have
 $j=e_{t_{k+1}}j + j e_{t_{k+1}}$ and $e_{t_{k+1}} j e_{t_{k+1}} = 0$.
 
 Let $\tilde \varphi \colon A/J \hookrightarrow A$ be the homomorphic embedding defined by \begin{equation*}\begin{split}\tilde\varphi(a) = (1_A + e_{t_{k+1}}j - je_{t_{k+1}})\varphi(a)(1_A + e_{t_{k+1}}j - je_{t_{k+1}})^{-1}
 =\\(1_A + e_{t_{k+1}}j - je_{t_{k+1}})\varphi(a)(1_A - e_{t_{k+1}}j + je_{t_{k+1}}).\end{split}\end{equation*}
 Note that $\pi\tilde\varphi=\id_{A/J}$ and $$\tilde \varphi \pi(e_{t_i})=(1_A + e_{t_{k+1}}j - je_{t_{k+1}})e_{t_i}(1_A - e_{t_{k+1}}j + je_{t_{k+1}})=e_{t_i} \text{ for all } 1\leqslant i \leqslant k.$$
 Moreover \begin{equation*}\begin{split}\tilde \varphi \pi(e_{t_{k+1}})=
 (1_A + e_{t_{k+1}}j - je_{t_{k+1}})\varphi \pi(e_{t_{k+1}})(1_A - e_{t_{k+1}}j + je_{t_{k+1}})
 =\\ 
 (1_A  + e_{t_{k+1}}j - je_{t_{k+1}})(e_{t_{k+1}}+j)(1_A  - e_{t_{k+1}}j + je_{t_{k+1}})=\\
 (e_{t_{k+1}}+j  - je_{t_{k+1}})(1_A  - e_{t_{k+1}}j + je_{t_{k+1}})=\\
e_{t_{k+1}}+j  - je_{t_{k+1}} - e_{t_{k+1}}j =
 e_{t_{k+1}}.\end{split}\end{equation*}
 Therefore $\tilde B = \tilde\varphi(A/J)$ is a maximal semisimple subalgebra
 such that $A = \tilde B \oplus J$ (direct sum of subspaces)
 and $\tilde \varphi \pi(e_{t_i}) = e_{t_i}$ for all $1\leqslant i \leqslant k+1$.
 Thus, using the induction argument, we may assume that 
 $e_t=\tilde \varphi \pi(e_t) \in \tilde B$ for all $t\in T$.
Hence $\tilde B=\bigoplus_{t\in T} \tilde B{e_t}$ is a graded subalgebra of $A$.

We have proved the theorem for the case $J^2=0$. 
The general case is proved by induction on $\dim A$.
Suppose $J^2\ne 0$. Then $A/J^2 = B_0 \oplus J/J^2$ (direct sum of graded subspaces)
for some graded maximal semisimple subalgebra $B_0$ of $A/J^2$.  Note that $1_{A/J^2} \in B_0$. 
Consider the preimage $B_1$ of $B_0$ in $A$ under the natural map $\pi_1 \colon A \twoheadrightarrow A/J^2$. Then $1_A \in B_1$.
Since $B_0 \cong A/J$ is semisimple, $J(B_1)=J^2$.
Moreover $\dim B_1 < \dim A$ and, by the induction assumption, we have $B_1 = B \oplus J^2$
(direct sum of graded subspaces) for some graded maximal semisimple subalgebra $B$ in $A$.
Hence $A = B \oplus J$ (direct sum of graded subspaces) and the theorem is proved.
\end{proof}

Recall that a semigroup $T$ is \textit{cancellative} if for every $a,b,c \in T$ each of the conditions $ac = bc$ and $ca=cb$ implies $a=b$.

\begin{proposition}\label{PropositionTCancelWedderburn}
Let $B$ be a finite dimensional associative $T$-graded semisimple algebra over a field $F$
for some cancellative semigroup $T$.
Then $B$ is the direct sum of $T$-graded ideals that are $T$-graded-simple algebras.
In particular, the $T$-graded analog of the Wedderburn~--- Artin theorem holds.\end{proposition}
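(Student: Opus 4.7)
The plan is to argue by induction on $\dim B$, at each step peeling off a minimal graded ideal together with its annihilator. First I would pick a minimal nonzero graded ideal $I \subseteq B$, which exists by finite dimensionality. Since $B$ is a finite dimensional semisimple algebra, it is unital and every two-sided ideal of $B$ is generated by a central idempotent; in particular $I^2 = I \ne 0$. Combined with minimality among graded ideals, this shows $I$ is $T$-graded-simple.

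Next, consider the two-sided annihilator $I_0 = \Ann_B(I) = \lbrace x\in B \mid xI = Ix = 0\rbrace$. By the ordinary Wedderburn~--- Artin theorem applied to $B$, the complementary central idempotent of the one generating $I$ generates $I_0$, and $B = I \oplus I_0$ both as an algebra and as a bimodule. The crucial step is to check that $I_0$ is a graded subspace, which is where cancellativity enters. Take $x = \sum_{t\in T} x_t \in I_0$ with $x_t \in B^{(t)}$ and a homogeneous $y \in I \cap B^{(s)}$. Then $\sum_t x_t y = xy = 0$, and each summand $x_t y$ lies in $B^{(ts)}$. Right cancellativity makes the map $t \mapsto ts$ injective, so the nonzero summands $x_t y$ sit in pairwise distinct graded components and therefore must vanish individually. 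A symmetric computation with $yx = 0$ uses left cancellativity to give $y x_t = 0$. Since $I$ is graded and hence spanned by its homogeneous elements, each $x_t$ annihilates $I$ on both sides, so $x_t \in I_0$ for every $t\in T$, proving that $I_0$ is graded.

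Finally, $I_0$ is a finite dimensional $T$-graded semisimple algebra of strictly smaller dimension, so the induction hypothesis furnishes a decomposition $I_0 = I_1 \oplus \cdots \oplus I_m$ into $T$-graded-simple $T$-graded ideals; prepending $I$ yields the desired decomposition of $B$. The main obstacle is the homogeneity of the annihilator $I_0$: without cancellativity the products $ts$ can collide for different $t$, the components $x_t y$ need not land in distinct graded pieces, and $I_0$ need not be a graded subspace. This is precisely the phenomenon that breaks the $T_1$-graded Wedderburn~--- Artin theorem in Example~\ref{ExampleT1Wedderburn}, so the cancellativity hypothesis is genuinely needed exactly at this step.
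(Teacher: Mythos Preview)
Your proof is correct and follows essentially the same approach as the paper's: pick a minimal graded ideal $I$, identify its complement in the Wedderburn decomposition with an annihilator of $I$, and use cancellativity of $T$ to show this annihilator is graded. The paper is terser (it uses the one-sided annihilator $N=\{b\in B\mid bI=0\}$ and merely asserts that cancellativity makes $N$ graded), while you spell out the injectivity-of-$t\mapsto ts$ argument explicitly; the underlying idea is identical.
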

\begin{proof}
By the ordinary Wedderburn~--- Artin theorem, $B = B_1 \oplus \ldots \oplus B_s$ (direct sum of ideals)
for some simple algebras $B_i$.
Let $I$ be a minimal $T$-graded ideal in $B$.
Then $I=B_{i_1}\oplus\ldots \oplus B_{i_k}$ for some $i_1, \ldots, i_k$.
Define $N:=\bigoplus_{i \in \lbrace 1,\ldots, s\rbrace \backslash \lbrace i_1, \ldots, i_k\rbrace} B_i$.
Since all $B_i$ are semisimple, we have $N=\lbrace b\in B \mid ba =0 \text{ for all } a\in I\rbrace$.
Since $T$ is cancellative and $I$ is graded, the ideal $N$ is graded too.
Hence $B = I \oplus N$ (direct sum of graded ideals) where $I$ is a $T$-graded-simple algebra. Applying to $N$ the inductive argument, we get the proposition.
\end{proof}

\section{Graded polynomial identities, their codimensions and cocharacters}

Let $T$ be a semigroup and let $F$ be a field. Denote by $F\langle X^{T\text{-}\mathrm{gr}} \rangle $ the free $T$-graded associative  algebra over $F$ on the countable set $$X^{T\text{-}\mathrm{gr}}:=\bigcup_{t \in T}X^{(t)},$$ $X^{(t)} = \{ x^{(t)}_1,
x^{(t)}_2, \ldots \}$,  i.e. the algebra of polynomials
 in non-commuting variables from $X^{T\text{-}\mathrm{gr}}$.
  The indeterminates from $X^{(t)}$ are said to be homogeneous of degree
$t$. The $T$-degree of a monomial $x^{(t_1)}_{i_1} \dots x^{(t_t)}_{i_s} \in F\langle
 X^{T\text{-}\mathrm{gr}} \rangle $ is defined to
be $t_1 t_2 \dots t_s$, as opposed to its total degree, which is defined to be $s$. Denote by
$F\langle
 X^{T\text{-}\mathrm{gr}} \rangle^{(t)}$ the subspace of the algebra $F\langle
 X^{T\text{-}\mathrm{gr}} \rangle$ spanned
 by all the monomials having
$T$-degree $t$. Notice that $$F\langle
 X^{T\text{-}\mathrm{gr}} \rangle^{(t)} F\langle
 X^{T\text{-}\mathrm{gr}} \rangle^{(h)} \subseteq F\langle
 X^{T\text{-}\mathrm{gr}} \rangle^{(th)},$$ for every $t, h \in T$. It follows that
$$F\langle
 X^{T\text{-}\mathrm{gr}} \rangle =\bigoplus_{t\in T} F\langle
 X^{T\text{-}\mathrm{gr}} \rangle^{(t)}$$ is a $T$-grading.
  Let $f=f(x^{(t_1)}_{i_1}, \dots, x^{(t_s)}_{i_s}) \in F\langle
 X^{T\text{-}\mathrm{gr}} \rangle$.
We say that $f$ is
a \textit{graded polynomial identity} of
 a $T$-graded algebra $A=\bigoplus_{t\in T}
A^{(t)}$
and write $f\equiv 0$
if $f(a^{(t_1)}_{i_1}, \dots, a^{(t_s)}_{i_s})=0$
for all $a^{(t_j)}_{i_j} \in A^{(t_j)}$, $1 \leqslant j \leqslant s$.
  The set $\Id^{T\text{-}\mathrm{gr}}(A)$ of graded polynomial identities of
   $A$ is
a graded ideal of $F\langle
 X^{T\text{-}\mathrm{gr}} \rangle$.

\begin{example}\label{ExampleIdGr}
 Let $T=(\mathbb Z_2,+) = \lbrace \bar 0, \bar 1 \rbrace$,
$M_2(F)=M_2(F)^{(\bar 0)}\oplus M_2(F)^{(\bar 1)}$
where $M_2(F)^{(\bar 0)}=\left(
\begin{array}{cc}
F & 0 \\
0 & F
\end{array}
 \right)$ and $M_2(F)^{(\bar 1)}=\left(
\begin{array}{cc}
0 & F \\
F & 0
\end{array}
 \right)$. Then  $x^{(\bar 0)} y^{(\bar 0)} - y^{(\bar 0)} x^{(\bar 0)}
\in \Id^{T\text{-}\mathrm{gr}}(M_2(F))$.
\end{example}

Let
$P^{T\text{-}\mathrm{gr}}_n := \langle x^{(t_1)}_{\sigma(1)}
x^{(t_2)}_{\sigma(2)}\ldots x^{(t_n)}_{\sigma(n)}
\mid t_i \in T, \sigma\in S_n \rangle_F \subset F \langle X^{T\text{-}\mathrm{gr}} \rangle$, $n \in \mathbb N$.
Then the number $$c^{T\text{-}\mathrm{gr}}_n(A):=\dim\left(\frac{P^{T\text{-}\mathrm{gr}}_n}{P^{T\text{-}\mathrm{gr}}_n \cap \Id^{T\text{-}\mathrm{gr}}(A)}\right)$$
is called the $n$th \textit{codimension of graded polynomial identities}
or the $n$th \textit{graded codimension} of $A$.

The analog of Amitsur's conjecture for graded codimensions can be formulated
as follows.

\begin{conjecture} There exists
 $\PIexp^{T\text{-}\mathrm{gr}}(A):=\lim\limits_{n\to\infty} \sqrt[n]{c^{T\text{-}\mathrm{gr}}_n(A)} \in \mathbb Z_+$.
\end{conjecture}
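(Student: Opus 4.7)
The plan is to adapt the Giambruno--Zaicev strategy used for group-graded algebras to the semigroup setting, following the outline already indicated in the introduction. Since the conjecture cannot hold for arbitrary semigroups $T$ (as the forthcoming theorems in the paper will demonstrate), the approach must be to isolate classes of $T$ for which the structural machinery from Sections~\ref{SectionSemigroupJacobson} and~\ref{SectionSemigroupWedderburn} is available, namely finite dimensional $A$ where $T$ is cancellative, or where $T$ is a left or right zero band and $A$ is unital.

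First I would establish that $J := J(A)$ is a graded ideal: for cancellative $T$ this is Kelarev and Plant's result cited in the introduction, while for a unital algebra graded by a left or right zero band it is Proposition~\ref{PropositionTIdemGradedIdeals}. Next, combining Propositions~\ref{PropositionTIdemWedderburn} or~\ref{PropositionTCancelWedderburn} with Theorem~\ref{TheoremTIdemGradedWeddMalcev} (in the zero-band case, and with a cancellative analog in the other), I would obtain a graded Wedderburn--Mal'cev decomposition
$$A \;=\; B_1 \oplus \ldots \oplus B_s \oplus J \quad \text{(direct sum of graded subspaces)},$$
with each $B_i$ a graded-simple subalgebra. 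This reduces the asymptotic problem to the interaction between these graded-simple blocks and the graded nilpotent radical $J$.

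From here I would run the standard upper/lower bound technique. For the upper bound one estimates $c_n^{T\text{-}\mathrm{gr}}(A)$ by splitting a multilinear polynomial according to how many of its variables are evaluated in each $B_i$ and how many in $J$, capping the latter count by the nilpotency index of $J$, and then invoking Regev's exponential bound together with hook-length estimates for the graded $S_n$-cocharacters. The resulting bound takes the Giambruno--Zaicev shape of a maximum, over admissible tuples of graded-simple blocks, of the sum of their graded dimensions. For the lower bound, for each admissible tuple I would build an explicit multilinear polynomial alternating in several compatible packets of homogeneous variables, and show it does not vanish on $A$ by evaluating on a carefully chosen tensor product of generic elements in the $B_{i_j}$'s separated by suitable elements of $J$.

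The main obstacle will be matching these two asymptotic bounds to conclude both the existence of the limit $\PIexp^{T\text{-}\mathrm{gr}}(A)$ and its integrality. This requires a nonvanishing result for the critical alternating polynomial on a product of graded-simple blocks, which in the group-graded theory rests on the fact that the graded PI-exponent of a graded-simple component equals its dimension; in the semigroup setting this equality can fail (as announced in the introduction for $T_3$-graded-simple algebras), so the real subtlety is identifying the correct replacement invariant for each admissible $T$ and proving it is an integer. Once this nonvanishing is in place, the standard Giambruno--Zaicev pinching argument produces the exponent.
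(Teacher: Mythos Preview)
This statement is a \emph{conjecture}, not a theorem: the paper does not prove it, and Theorems~\ref{TheoremT1GradFractPI}--\ref{TheoremT3GradFractPI} show it fails for each two-element semigroup that is not a group. You correctly recognise this and restrict to the positive cases (Theorems~\ref{TheoremTCancelAmitsur} and~\ref{TheoremTIdemAmitsur}); for those your outline is a plausible strategy, but it differs substantially from what the paper actually does, and the obstacle you flag at the end is not resolved in your sketch.

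The paper does \emph{not} rerun the Giambruno--Zaicev upper/lower bound argument directly in the graded setting. Instead it dualises: Lemma~\ref{LemmaCnGrCnGenH} shows $c_n^{T\text{-}\mathrm{gr}}(A)=c_n^{(FT)^*}(A)$ for the generalized $(FT)^*$-action on $A$, and then \cite[Theorem~1]{ASGordienko8} is invoked as a black box to get $C_1 n^{r_1} d^n \leqslant c_n^{(FT)^*}(A)\leqslant C_2 n^{r_2} d^n$ with an explicit integer $d$; this is Theorem~\ref{TheoremMainTGrAssoc}. The only structural hypotheses needed are that $J(A)$ be graded and that $A/J(A)$ decompose into graded-simple ideals; the Wedderburn--Mal'cev splitting $\varkappa$ in Theorem~\ref{TheoremMainTGrAssoc} is \emph{not} required to be graded, so no ``cancellative analog'' of Theorem~\ref{TheoremTIdemGradedWeddMalcev} is needed. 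The graded Wedderburn--Mal'cev theorem enters only in the zero-band case, and there only to identify $d$ with the ordinary $\PIexp(A)$.

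The ``main obstacle'' you isolate --- that a semigroup-graded-simple block need not have graded PI-exponent equal to its dimension, so the usual nonvanishing argument may fail --- is genuine, and your proposal leaves it open (``identifying the correct replacement invariant\dots''). The paper's route sidesteps it entirely: the formula for $d$ in Theorem~\ref{TheoremMainTGrAssoc} uses ordinary dimensions of the $B_i$ together with a non-vanishing condition on products $((FT)^*\varkappa(B_{i_1}))A^+\cdots A^+((FT)^*\varkappa(B_{i_r}))$, and the matching of upper and lower bounds is already contained in the cited $H$-action theorem. So where your plan would require a new nonvanishing lemma tailored to semigroup gradings, the paper changes category and quotes existing machinery.
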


If $T$ is the trivial (semi)group of one element, we get the notion of ordinary polynomial identities, ordinary codimensions $c_n(A)$, and the ordinary PI-exponent $\PIexp(A)$.

As we shall see in Theorems~\ref{TheoremT1GradFractPI}--\ref{TheoremT3GradFractPI} below, the analog of Amitsur's conjecture fails for all semigroups $T$ of two elements that are not groups.

However, in Theorem~\ref{TheoremMainTGrAssoc} below we provide sufficient conditions for a graded algebra to satisfy the analog of Amitsur's conjecture.
As a consequence, we prove that if $T$ is a cancellative semigroup or
$T$ is a left or right zero band, and a finite dimensional $T$-graded algebra $A$ contains $1$, then $A$ satisfies the graded analog of Amitsur's conjecture (Theorems~\ref{TheoremTCancelAmitsur} and~\ref{TheoremTIdemAmitsur}).

\section{Polynomial $H$-identities and their codimensions}  

In our case, instead of working with graded codimensions directly, it is more convenient to replace the grading with the corresponding dual structure and study the asymptotic behaviour of polynomial $H$-identities.

  Let $H$ be an arbitrary associative algebra with $1$ over a field $F$.
We say that an associative algebra $A$ is an algebra with a \textit{generalized $H$-action}
if $A$ is endowed with a homomorphism $H \to \End_F(A)$
and for every $h \in H$ 
there exist $k\in \mathbb N$ and $h'_i, h''_i, h'''_i, h''''_i \in H$, $1\leqslant i \leqslant k$,
such that 
\begin{equation}\label{EqGenHAction}
h(ab)=\sum_{i=1}^k\bigl((h'_i a)(h''_i b) + (h'''_i b)(h''''_i a)\bigr) \text{ for all } a,b \in A.
\end{equation}

\begin{remark}
We use the term ``generalized $H$-action'' in order to distinguish from the case when an algebra is
an $H$-module algebra for some Hopf algebra $H$ which is a particular case of the generalized $H$-action.
\end{remark}

Let $F \langle X \rangle$ be the free associative algebra without $1$
   on the set $X := \lbrace x_1, x_2, x_3, \ldots \rbrace$.
  Then $F \langle X \rangle = \bigoplus_{n=1}^\infty F \langle X \rangle^{(n)}$
  where $F \langle X \rangle^{(n)}$ is the linear span of all monomials of total degree $n$.
   Consider the algebra $$F \langle X | H\rangle
   :=  \bigoplus_{n=1}^\infty H^{{}\otimes n} \otimes F \langle X \rangle^{(n)}$$
   with the multiplication $(u_1 \otimes w_1)(u_2 \otimes w_2):=(u_1 \otimes u_2) \otimes w_1w_2$
   for all $u_1 \in  H^{{}\otimes j}$, $u_2 \in  H^{{}\otimes k}$,
   $w_1 \in F \langle X \rangle^{(j)}$, $w_2 \in F \langle X \rangle^{(k)}$.
We use the notation $$x^{h_1}_{i_1}
x^{h_2}_{i_2}\ldots x^{h_n}_{i_n} := (h_1 \otimes h_2 \otimes \ldots \otimes h_n) \otimes x_{i_1}
x_{i_2}\ldots x_{i_n}.$$ Here $h_1 \otimes h_2 \otimes \ldots \otimes h_n \in H^{{}\otimes n}$,
$x_{i_1} x_{i_2}\ldots x_{i_n} \in F \langle X \rangle^{(n)}$. 

Note that if $(\gamma_\beta)_{\beta \in \Lambda}$ is a basis in $H$, 
then $F\langle X | H \rangle$ is isomorphic to the free associative algebra over $F$ with free formal  generators $x_i^{\gamma_\beta}$, $\beta \in \Lambda$, $i \in \mathbb N$.
 We refer to the elements
 of $F\langle X | H \rangle$ as \textit{associative $H$-polynomials}.
Note that here we do not consider any $H$-action on $F \langle X | H \rangle$.

Let $A$ be an associative algebra with a generalized $H$-action.
Any map $\psi \colon X \to A$ has the unique homomorphic extension $\bar\psi
\colon F \langle X | H \rangle \to A$ such that $\bar\psi(x_i^h)=h\psi(x_i)$
for all $i \in \mathbb N$ and $h \in H$.
 An $H$-polynomial
 $f \in F\langle X | H \rangle$
 is an \textit{$H$-identity} of $A$ if $\bar\psi(f)=0$
for all maps $\psi \colon X \to A$. In other words, $f(x_1, x_2, \ldots, x_n)$
 is an $H$-identity of $A$
if and only if $f(a_1, a_2, \ldots, a_n)=0$ for any $a_i \in A$.
 In this case we write $f \equiv 0$.
The set $\Id^{H}(A)$ of all $H$-identities
of $A$ is an ideal of $F\langle X | H \rangle$.

We denote by $P^H_n$ the space of all multilinear $H$-polynomials
in $x_1, \ldots, x_n$, $n\in\mathbb N$, i.e.
$$P^{H}_n = \langle x^{h_1}_{\sigma(1)}
x^{h_2}_{\sigma(2)}\ldots x^{h_n}_{\sigma(n)}
\mid h_i \in H, \sigma\in S_n \rangle_F \subset F \langle X | H \rangle.$$
Then the number $c^H_n(A):=\dim\left(\frac{P^H_n}{P^H_n \cap \Id^H(A)}\right)$
is called the $n$th \textit{codimension of polynomial $H$-identities}
or the $n$th \textit{$H$-codimension} of $A$.

One of the main tools in the investigation of polynomial
identities is provided by the representation theory of symmetric groups.
 The symmetric group $S_n$  acts
 on the space $\frac {P^H_n}{P^H_{n}
  \cap \Id^H(A)}$
  by permuting the variables.
  Irreducible $FS_n$-modules are described by partitions
  $\lambda=(\lambda_1, \ldots, \lambda_s)\vdash n$ and their
  Young diagrams $D_\lambda$.
   The character $\chi^H_n(A)$ of the
  $FS_n$-module $\frac {P^H_n}{P^H_n
   \cap \Id^H(A)}$ is
   called the $n$th
  \textit{cocharacter} of polynomial $H$-identities of $A$.
  We can rewrite it as
  a sum $$\chi^H_n(A)=\sum_{\lambda \vdash n}
   m(A, H, \lambda)\chi(\lambda)$$ of
  irreducible characters $\chi(\lambda)$.
Let  $e_{T_{\lambda}}=a_{T_{\lambda}} b_{T_{\lambda}}$
and
$e^{*}_{T_{\lambda}}=b_{T_{\lambda}} a_{T_{\lambda}}$
where
$a_{T_{\lambda}} = \sum_{\pi \in R_{T_\lambda}} \pi$
and
$b_{T_{\lambda}} = \sum_{\sigma \in C_{T_\lambda}}
 (\sign \sigma) \sigma$,
be Young symmetrizers corresponding to a Young tableau~$T_\lambda$.
Then $M(\lambda) = FS_n e_{T_\lambda} \cong FS_n e^{*}_{T_\lambda}$
is an irreducible $FS_n$-module corresponding to
 a partition~$\lambda \vdash n$.
  We refer the reader to~\cite{Bahturin, DrenKurs, ZaiGia}
   for an account
  of $S_n$-representations and their applications to polynomial
  identities.

\section{Generalized $(FT)^*$-action on $T$-graded algebras}

In this section we show that every finite dimensional semigroup graded algebra is an algebra with a generalized $H$-action for a suitable associative algebra $H$.

For an arbitrary semigroup $T$ one can consider the \textit{semigroup algebra} $FT$ over a field $F$ which is the vector space with the formal basis $(t)_{t\in T}$ and the multiplication induced by the one in $T$.

Consider the vector space $(FT)^*$ dual to $FT$. Then $(FT)^*$ is an algebra with the multiplication defined
by $(hw)(t)=h(t)w(t)$ for $h,w \in (FT)^*$ and $t\in T$. The identity element 
is defined by $1_{(FT)^*}(t)=1$ for all $t\in T$. In other words, $(FT)^*$ is the algebra dual to the coalgebra $FT$.

Let $\Gamma \colon A=\bigoplus_{t\in T} A^{(t)}$ be a grading on an algebra $A$. We have the following natural $(FT)^*$-action on $A$: $h a^{(t)}:=h(t)a^{(t)}$ for all $h \in (FT)^*$, $a^{(t)}\in A^{(t)}$
and $t\in T$.

\begin{remark}
If $T$ is a finite group, then $A$ is an $FT$-comodule algebra for the Hopf algebra $FT$
and an $(FT)^*$-module algebra for the Hopf algebra $(FT)^*$.
\end{remark}

For every $t\in T$ define $h_t \in (FT)^*$ by $h_t(g)=\left\lbrace \begin{array}{lll}
0 & \text{if} & g\ne t, \\
1 & \text{if} & g = t
\end{array}\right.$ for $g\in T$.

If $A$ is finite dimensional,
the set $\supp \Gamma := \lbrace t\in T
\mid A^{(t)}\ne 0 \rbrace$ is finite and $$h_t(ab)=\sum\limits_{\substack{g, w \in \supp \Gamma,\\
gw=t}} h_g(a)h_w(b)\text{ for all }a,b\in A.$$

Note that $ha = \sum_{t\in \supp \Gamma}h(t)h_t a$ for all $a\in A$
and \begin{equation}\label{EqIdentityHFiniteSupp}x^h - \sum_{t\in \supp \Gamma}h(t) x^{h_t} \in \Id^{(FT)^*}(A)
\end{equation}
for all $h\in (FT)^*$.
 By linearity, we get~(\ref{EqGenHAction}). Therefore,  $A$ is an algebra with a generalized $(FT)^*$-action.

\begin{lemma}\label{LemmaCnGrCnGenH}
Let $A$ be a finite dimensional algebra over a field $F$ graded by a semigroup $T$.
Then $c_n^{T\text{-}\mathrm{gr}}(A)=c_n^{(FT)^*}(A)$ for all $n\in \mathbb N$.
\end{lemma}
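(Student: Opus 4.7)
The plan is to construct a natural linear isomorphism between the two quotient spaces
$P_n^{T\text{-}\mathrm{gr}}/(P_n^{T\text{-}\mathrm{gr}}\cap\Id^{T\text{-}\mathrm{gr}}(A))$ and
$P_n^{(FT)^*}/(P_n^{(FT)^*}\cap\Id^{(FT)^*}(A))$. Write $S:=\supp\Gamma\subseteq T$, which is finite since $\dim A<\infty$.

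First I would reduce both sides to a common combinatorial template indexed by $S^n\times S_n$. On the graded side, because $A^{(t)}=0$ for $t\notin S$, every monomial of $P_n^{T\text{-}\mathrm{gr}}$ containing a variable $x_i^{(t)}$ with $t\notin S$ evaluates to zero on any homogeneous substitution and hence lies in $\Id^{T\text{-}\mathrm{gr}}(A)$; so modulo this ideal the quotient is spanned by $\{x_{\sigma(1)}^{(t_1)}\cdots x_{\sigma(n)}^{(t_n)}:\sigma\in S_n,\ \vec t\in S^n\}$. On the $H$-side, identity~(\ref{EqIdentityHFiniteSupp}) lets me replace each $x_i^h$ by $\sum_{t\in S}h(t)x_i^{h_t}$ modulo $\Id^{(FT)^*}(A)$, reducing that quotient to the span of $\{x_{\sigma(1)}^{h_{t_1}}\cdots x_{\sigma(n)}^{h_{t_n}}:\sigma\in S_n,\ \vec t\in S^n\}$.

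Next I would define the obvious linear map on these reduced spans,
$$\Psi\!\left(x_{\sigma(1)}^{(t_1)}\cdots x_{\sigma(n)}^{(t_n)}\right)
:=x_{\sigma(1)}^{h_{t_1}}\cdots x_{\sigma(n)}^{h_{t_n}},$$
which is a vector-space bijection by construction, and show that it descends to an isomorphism of the quotients. Given any reduced $f$ in the graded span and any $b_1,\dots,b_n\in A$ with decomposition $b_j=\sum_{s\in S}b_j^{(s)}$, a direct expansion gives
$$\Psi(f)(b_1,\dots,b_n)=\sum_{\vec s\in S^n}f_{\vec s}\bigl(b_1^{(s_1)},\dots,b_n^{(s_n)}\bigr),$$
where $f_{\vec s}$ denotes the graded-homogeneous component of $f$ of multidegree $\vec s$. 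The forward direction is then immediate: if $f\in\Id^{T\text{-}\mathrm{gr}}(A)$, each summand vanishes, so $\Psi(f)\in\Id^{(FT)^*}(A)$.

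For the converse I would exploit the independence of the homogeneous sectors of $A$: fixing any $\vec s\in S^n$ and choosing $b_j\in A^{(s_j)}$ (so $b_j^{(t)}=b_j\delta_{t,s_j}$) kills every summand except the $\vec s$-one, and the latter equals the generic graded evaluation of $f_{\vec s}$; hence $\Psi(f)\equiv 0$ forces each $f_{\vec s}$ to be a graded identity, and therefore $f=\sum_{\vec s}f_{\vec s}\in\Id^{T\text{-}\mathrm{gr}}(A)$. The main obstacle I anticipate is the bookkeeping: a single multilinear graded polynomial may contain the same index $x_j$ with distinct grades across different monomials, so the extraction of the piece $f_{\vec s}$ and its identification with the $\vec s$-summand of the $H$-evaluation has to be tracked carefully; once this is done the equality of codimensions follows from the bijection $\Psi$ on bases.
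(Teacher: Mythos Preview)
Your proposal is correct and follows essentially the same approach as the paper: the map $\Psi$ you define is exactly the restriction to multilinear parts of the paper's homomorphism $\eta\colon x_i^{(t)}\mapsto x_i^{h_t}$, and your reduction of the $H$-side via~(\ref{EqIdentityHFiniteSupp}) corresponds to the paper's inverse map $\xi$. The only organizational difference is that the paper builds both $\eta$ and $\xi$ as algebra homomorphisms and checks they are mutual inverses on the full quotient algebras, whereas you work directly on $P_n$ and verify the single map $\Psi$ is identity-preserving in both directions by an explicit evaluation/component-isolation argument; the content is the same.
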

\begin{proof} Denote the grading $ A=\bigoplus_{t\in T} A^{(t)}$ by $\Gamma$.

Let $$\xi \colon F\langle X \mid (FT)^* \rangle \to F\langle X^{T\text{-}\mathrm{gr}} \rangle$$ be the homomorphism  of algebras defined by $\xi(x_i^h) = \sum\limits_{t\in\supp \Gamma} h(t)x^{(t)}_i$, $i\in\mathbb N$, $h\in (FT)^*$. Suppose $f\in \Id^{(FT)^*}(A)$. Consider an arbitrary graded homomorphism $\psi \colon  
F\langle X^{T\text{-}\mathrm{gr}} \rangle \to A$. Then the homomorphism  of algebras $\psi\xi \colon F\langle X \mid (FT)^* \rangle
\to A$ satisfies the condition $$\psi\xi(x_i^h)=\sum\limits_{t\in\supp \Gamma} h(t)\psi\left(x^{(t)}_i\right)=
h\left(\sum\limits_{t\in\supp \Gamma} \psi\left(x^{(t)}_i\right)\right)=h\,\psi\xi(x_i).$$ Thus $\psi\xi(f) =0$ and $\xi(f)\in \Id^{T\text{-}\mathrm{gr}}(A)$. Hence $\xi\left(\Id^{(FT)^*}(A)\right)\subseteq \Id^{T\text{-}\mathrm{gr}}(A)$.
Denote by $$\tilde \xi \colon F\langle X \mid (FT)^* \rangle/\Id^{(FT)^*}(A) \to F\langle X^{T\text{-}\mathrm{gr}} \rangle/\Id^{T\text{-}\mathrm{gr}}(A)$$ the homomorphism induced by $\xi$.

Let $$\eta \colon F\langle X^{T\text{-}\mathrm{gr}} \rangle \to F\langle X \mid (FT)^* \rangle$$
be the homomorphism defined by $\eta\left(x^{(t)}_i\right) = x^{h_t}_i$ for all $i\in \mathbb N$
and $t\in T$. Consider an arbitrary graded polynomial identity $f\in F\langle X^{T\text{-}\mathrm{gr}} \rangle$.
Let $\psi \colon  F\langle X \mid (FT)^* \rangle \to A$ be a homomorphism satisfying the condition
$\psi(x_i^h)=h\psi(x_i)$ for every $i\in\mathbb N$ and $h\in (FT)^*$.
Then for any $i\in\mathbb N$ and $g, t \in T$ we have
$$h_g \psi\eta\left(x^{(t)}_i\right) = h_g\psi(x^{h_t}_i)=h_g h_t \psi(x_i)
=\left\lbrace \begin{array}{lll} 0 & \text{ if } & g\ne t,\\
                              \psi\eta\left(x^{(t)}_i\right) & \text{ if } & g=t. \end{array}\right.$$
 Thus $\psi\eta\left(x^{(t)}_i\right) \in A^{(t)}$ and $\psi\eta$ is a graded homomorphism.
 Therefore, $\psi\eta(f)=0$ and $\eta(\Id^{T\text{-}\mathrm{gr}}(A)) \subseteq \Id^{(FT)^*}(A)$.
Denote by $\tilde\eta \colon  F\langle X^{T\text{-}\mathrm{gr}} \rangle/\Id^{T\text{-}\mathrm{gr}}(A) \to
F\langle X \mid (FT)^* \rangle/\Id^{(FT)^*}(A)$ the induced homomorphism.

Now we use the notation $\bar f = f + \Id^{(FT)^*}(A) \in F\langle X \mid (FT)^* \rangle/\Id^{(FT)^*}(A)$ for $f\in
F\langle X \mid (FT)^* \rangle$ and  $\bar f = f + \Id^{T\text{-}\mathrm{gr}}(A) \in F\langle X^{T\text{-}\mathrm{gr}} \rangle/\Id^{T\text{-}\mathrm{gr}}(A)$ for $f\in F\langle X^{T\text{-}\mathrm{gr}} \rangle$.
We have $$\tilde\eta\tilde\xi\left(\bar x^h_i\right)=\tilde\eta\left(
\sum\limits_{t\in\supp \Gamma} h(t) \bar x^{(t)}_i\right)
=\sum\limits_{t\in\supp \Gamma} h(t) \bar x^{h_t}_i = \bar x^h_i$$
for every $h\in (FT)^*$ and $i\in\mathbb N$. (Here we use~(\ref{EqIdentityHFiniteSupp}).)
Thus $\tilde\eta\tilde\xi=\id_{F\langle X \mid (FT)^* \rangle/\Id^{(FT)^*}(A)}$.
Moreover $\tilde\xi\tilde\eta\left(\bar x^{(t)}_i\right)=
\tilde\xi\left(\bar x^{h_t}_i\right)=\bar x^{(t)}_i$ for every $t\in T$ and $i\in \mathbb N$.
Therefore, $\tilde\xi\tilde\eta=\id_{F\langle X^{T\text{-}\mathrm{gr}} \rangle/\Id^{T\text{-}\mathrm{gr}}(A)}$
and $F\langle X^{T\text{-}\mathrm{gr}} \rangle/\Id^{T\text{-}\mathrm{gr}}(A) \cong F\langle X \mid (FT)^* \rangle/\Id^{(FT)^*}(A)$
as algebras. The restriction of $\tilde\xi$ provides the isomorphism of
$\frac{P^{(FT)^*}_n}{P^{(FT)^*}_n \cap \Id^{(FT)^*}(A)}$ and $\frac{P^{T\text{-}\mathrm{gr}}_n}{P^{T\text{-}\mathrm{gr}}_n\cap \Id^{T\text{-}\mathrm{gr}}(A)}$.  Hence $$c^{(FT)^*}_n(A)=\dim \frac{P^{(FT)^*}_n}{P^{(FT)^*}_n \cap \Id^{(FT)^*}(A)}
= \dim\frac{P^{T\text{-}\mathrm{gr}}_n}{P^{T\text{-}\mathrm{gr}}_n\cap \Id^{T\text{-}\mathrm{gr}}(A)}=c^{T\text{-}\mathrm{gr}}_n(A).$$
\end{proof}

Now we can provide a sufficient condition for a graded algebra to satisfy the graded analog of Amitsur's conjecture.

\begin{theorem}\label{TheoremMainTGrAssoc}
Let $A$ be a finite dimensional non-nilpotent $T$-graded associative algebra
over an algebraically closed field $F$ of characteristic $0$ for some semigroup $T$.
Suppose that the Jacobson radical $J:=J(A)$ is a graded ideal.
Let $$A/J = B_1 \oplus \ldots \oplus B_q \text{ (direct sum of graded ideals)}$$
where $B_i$ are graded-simple algebras and let $\varkappa \colon A/J \to A$
be any homomorphism of algebras (not necessarily graded) such that $\pi\varkappa = \id_{A/J}$ where $\pi \colon A \to A/J$ is the natural projection.
 Then
there exist constants $C_1, C_2 > 0$, $r_1, r_2 \in \mathbb R$
such that $C_1 n^{r_1} d^n \leqslant c^{T\text{-}\mathrm{gr}}_n(A) \leqslant C_2 n^{r_2} d^n$
for all $n \in \mathbb N$
where $$d= \max\dim\left( B_{i_1}\oplus B_{i_2} \oplus \ldots \oplus B_{i_r}
 \mathbin{\Bigl|}  r \geqslant 1,\right.$$ \begin{equation*}\left. ((FT)^*\varkappa(B_{i_1}))A^+ \,((FT)^*\varkappa(B_{i_2})) A^+ \ldots ((FT)^*\varkappa(B_{i_{r-1}})) A^+\,((FT)^*\varkappa(B_{i_r}))\ne 0\right)\end{equation*} and
 $A^+:=A+F\cdot 1$.
\end{theorem}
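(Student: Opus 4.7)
The plan is to reduce the statement about graded codimensions to a statement about $H$-codimensions, where $H=(FT)^*$, and then invoke a general asymptotic theorem for algebras equipped with a generalized $H$-action.

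First, by Lemma~\ref{LemmaCnGrCnGenH}, we have $c^{T\text{-}\mathrm{gr}}_n(A)=c^{(FT)^*}_n(A)$ for all $n$, so it suffices to prove the two-sided bound for the $(FT)^*$-codimensions. The preceding section already shows that $A$ carries a generalized $(FT)^*$-action, and the hypothesis that $J=J(A)$ is a graded ideal is equivalent to its $(FT)^*$-invariance. Thus we are in the setting of the author's general machinery for generalized $H$-actions: the quotient $A/J$ decomposes as $B_1\oplus\ldots\oplus B_q$ where each $B_i$ is graded-simple, and therefore $(FT)^*$-simple (since graded ideals are exactly $(FT)^*$-stable subspaces).

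For the upper bound, I would follow the Giambruno--Zaicev strategy adapted to generalized $H$-actions (as carried out in the author's earlier papers cited in the introduction). The key ingredients are: the Wedderburn--Mal'cev-type decomposition $A = \varkappa(A/J)\oplus J$ as vector spaces (where $\varkappa$ need not be graded, which is why the statement uses an arbitrary section); the nilpotency index $p$ of $J$, which bounds the number of factors from $J$ that can appear in a non-identity monomial; and a counting argument on multilinear monomials of the form
\begin{equation*}
w_0\, \varkappa(a_1)\, w_1\, \varkappa(a_2)\, w_2\, \ldots\, \varkappa(a_r)\, w_r
\end{equation*}
with $a_j\in B_{i_j}$ and $w_\ell\in A^+$, combined with an estimate of $\dim P_n^H/(P_n^H\cap \Id^H(B_{i_1}\oplus\ldots\oplus B_{i_r}))$ in terms of hook partitions. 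Summing over all admissible choices of indices gives a bound of the form $C_2 n^{r_2}d^n$, where $d$ is precisely the number in the statement, since any tuple $(i_1,\ldots,i_r)$ violating the displayed non-vanishing condition contributes only to $\Id^H(A)$.

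For the lower bound, I would construct explicit multilinear $H$-polynomials that are not identities. Fix a tuple $(i_1,\ldots,i_r)$ of indices realizing the maximum $d$, together with elements $b_1,\ldots,b_{r-1}\in A^+$ witnessing the non-vanishing of the product of $((FT)^*\varkappa(B_{i_j}))$ separated by $b_j$. Choose bases of each $\varkappa(B_{i_j})$ suitable for alternation and use Regev--style polynomials that are alternating on groups of variables of size equal to $\dim B_{i_j}$. Evaluating on matrix units inside the simple components of $A/J$ (after lifting via $\varkappa$) yields a nonzero element, so these polynomials fall outside $\Id^{H}(A)$, and counting the resulting linearly independent classes in $P_n^H/(P_n^H\cap \Id^H(A))$ via the hook-shape dimension formula produces a lower bound of the form $C_1 n^{r_1}d^n$.

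The main obstacle is the upper bound: one must ensure that the only monomials contributing asymptotically are those respecting an admissible partition of the variables into ``semisimple'' slots (hitting $\varkappa(B_i)$) and ``radical'' slots (hitting $J$ or $F\cdot 1$), and that the choice of semisimple components along the monomial is forced into an admissible tuple $(i_1,\ldots,i_r)$. This relies on the fact that $\varkappa(B_i)\varkappa(B_j)\subseteq \varkappa(B_i)\oplus J$ is $(FT)^*$-invariant up to $J$ (so that the $(FT)^*$-action can be absorbed into the chosen factors $(FT)^*\varkappa(B_{i_j})$), together with the identity~(\ref{EqIdentityHFiniteSupp}) reducing an arbitrary $h\in (FT)^*$ to the finite set of $h_t$, $t\in\supp\Gamma$, which keeps the combinatorial count polynomial in $n$. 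Once this reduction is in place, both bounds follow from the same multilinear counting argument as in the group-graded case.
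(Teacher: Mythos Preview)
Your proposal is correct and follows the same route as the paper: reduce to $(FT)^*$-codimensions via Lemma~\ref{LemmaCnGrCnGenH}, then apply the general asymptotic theorem for algebras with a generalized $H$-action and $H$-invariant radical. The paper's actual proof is a one-line citation of Lemma~\ref{LemmaCnGrCnGenH} together with \cite[Theorem~1]{ASGordienko8}; your sketch of the upper and lower bounds simply unpacks what that external theorem says, so the only difference is that you reprove the cited result rather than invoke it.
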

\begin{proof}
The theorem is an immediate consequence of Lemma~\ref{LemmaCnGrCnGenH}
and~\cite[Theorem~1]{ASGordienko8}.
\end{proof}
\begin{remark}
The existence of the map $\varkappa$ follows from the ordinary Wedderburn~--- Mal'cev theorem.
\end{remark}
\begin{remark}
If $A$ is nilpotent, i.e. $x_1 \ldots x_p \equiv 0$ for some $p\in\mathbb N$, then  $P^{T\text{-}\mathrm{gr}}_n \subseteq \Id^{T\text{-}\mathrm{gr}}(A)$ and $c^{T\text{-}\mathrm{gr}}_n(A)=0$ for all $n \geqslant p$.
\end{remark}
\begin{corollary}
The above analog of Amitsur's conjecture holds for such codimensions.
\end{corollary}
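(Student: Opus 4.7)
The plan is to derive the corollary by a simple squeeze argument from the two-sided estimate in Theorem~\ref{TheoremMainTGrAssoc}, together with the nilpotent case handled by the preceding remark. First I would split into two cases according to whether $A$ is nilpotent. If $A$ is nilpotent, the remark above gives $c^{T\text{-}\mathrm{gr}}_n(A)=0$ for all sufficiently large $n$, so $\sqrt[n]{c^{T\text{-}\mathrm{gr}}_n(A)} \to 0$, and the limit exists as a non-negative integer.

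In the non-nilpotent case, Theorem~\ref{TheoremMainTGrAssoc} produces constants $C_1, C_2 > 0$ and $r_1, r_2 \in \mathbb{R}$ together with an integer $d \geqslant 0$ (automatically a non-negative integer, since it is defined as the maximum of dimensions of direct sums of the graded-simple components $B_i$) such that
$$C_1 n^{r_1} d^n \leqslant c^{T\text{-}\mathrm{gr}}_n(A) \leqslant C_2 n^{r_2} d^n$$
for all $n \in \mathbb{N}$. Taking $n$th roots gives
$$\sqrt[n]{C_1 n^{r_1}}\cdot d \;\leqslant\; \sqrt[n]{c^{T\text{-}\mathrm{gr}}_n(A)} \;\leqslant\; \sqrt[n]{C_2 n^{r_2}}\cdot d,$$
and since $\sqrt[n]{Cn^r} \to 1$ as $n \to \infty$ for every fixed constant $C>0$ and every real $r$, the squeeze theorem forces $\sqrt[n]{c^{T\text{-}\mathrm{gr}}_n(A)} \to d$. (In the non-nilpotent case one can also observe that $d \geqslant 1$, by choosing $r=1$ and any single $B_i$ in the description of $d$, so the lower bound is not vacuous.)

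Since $d$ is an element of $\mathbb{Z}_+$ by construction, the existence of $\PIexp^{T\text{-}\mathrm{gr}}(A) = d \in \mathbb{Z}_+$ follows in both cases, which is exactly the assertion of the graded analog of Amitsur's conjecture. I do not anticipate any genuine obstacle here: all the nontrivial work has already been absorbed into Theorem~\ref{TheoremMainTGrAssoc} (whose proof is in turn deferred to Lemma~\ref{LemmaCnGrCnGenH} and the cited result \cite[Theorem~1]{ASGordienko8}), so the corollary is a one-line consequence of the squeeze and integrality of $d$.
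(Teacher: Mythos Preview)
Your proof is correct and matches the paper's intent: the corollary is stated without proof there, being an immediate consequence of the two-sided bound in Theorem~\ref{TheoremMainTGrAssoc} (together with the nilpotent remark) via exactly the squeeze argument you give. The only thing one might add is that the hypotheses of Theorem~\ref{TheoremMainTGrAssoc} already include that $A$ is non-nilpotent, so strictly speaking the nilpotent case is outside the scope of ``such codimensions''; but including it does no harm.
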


\section{Partitions restricted to convex polytopes}

Here we apply ideas from~\cite{VerZaiMishch} and prove auxiliary results that we use in the construction
of algebras with non-integer graded PI-exponents.

In this section we show that if all the partitions $\lambda\vdash n$ that correspond to
irreducible $FS_n$-modules with nonzero multiplicities $m(A,H,\lambda)$ belong
to a convex polyhedron, then $\mathop{\overline\lim}_{n\to\infty}\sqrt[n]{c_n^{H}(A)}$ is bounded
by the maximum of a particular function $\Phi$ on the ``continuous'' version of the polyhedron.

Fix $q\in\mathbb N$. Let $\Phi(\alpha_1, \ldots, \alpha_q)=\frac{1}{\alpha_1^{\alpha_1} \ldots \alpha_q^{\alpha_q}}$. Define $0^0 := 1$. Then $\Phi$ is continuous on the segment
$\lbrace (\alpha_1, \ldots, \alpha_q) \mid \alpha_i \geqslant 0\rbrace$.

Suppose we have some numbers $\gamma_{ij} \in \mathbb R$ for $1\leqslant i \leqslant m$, $0\leqslant j \leqslant q$ and $\theta_k \in \mathbb Z_+$ for $q< k \leqslant r$ where $m,r\in \mathbb Z_+$, $r\geqslant q$. Define
$$\Omega = \left\lbrace
(\alpha_1, \ldots, \alpha_q)\in \mathbb R^q \mathrel{\Bigl|}\sum_{i=1}^q \alpha_i=1,\ \alpha_1\geqslant\alpha_2\geqslant \ldots \geqslant \alpha_q\geqslant 0,
\ \sum_{j=1}^q \gamma_{ij}\alpha_j \geqslant 0\text{ for } 1\leqslant i \leqslant m\right\rbrace.$$

For every $n\in \mathbb N$ we define
$$\Omega_n = \left\lbrace
\lambda \vdash n \mathrel{\Bigl|} \sum_{j=1}^q \gamma_{ij}\lambda_j+\gamma_{i0} \geqslant 0 \text{ for } 1\leqslant i \leqslant m,\ \lambda_i \leqslant \theta_i \text{ for } q < i \leqslant r,\ 
\lambda_{r+1}=0 \right\rbrace.$$

We treat $\Omega$ and $\Omega_n$ as the ``continuous'' and the ``discrete'' version
of the same polyhedron. 

Denote by $d$ the maximum of $\Phi$ on the compact set $\Omega$. (We assume $\Omega$ to be non-empty.)

\begin{lemma}\label{LemmaTExampleUpperFd}
Let $A$ be an algebra with a generalized $H$-action where $H$ is an associative algebra with unity
over a field $F$ of characteristic $0$. Suppose $m(A, H, \lambda)=0$ for all $\lambda\vdash n$, $\lambda 
\notin \Omega_n$, $n\in\mathbb N$. Then 
$\mathop{\overline\lim}_{n\to\infty}\sqrt[n]{c_n^{H}(A)}
\leqslant d$.
\end{lemma}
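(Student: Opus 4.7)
The plan is to decompose $c_n^H(A)$ via its cocharacter, use Stirling's formula to bound $\dim M(\lambda)$ by $\Phi(\lambda/n)^n$ up to polynomial factors, and then transfer the bound from the discrete polytope $\Omega_n$ to the continuous polytope $\Omega$ via uniform continuity of $\Phi$.

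First I would start from the identity
$$c_n^H(A) = \sum_{\lambda \vdash n} m(A, H, \lambda)\dim M(\lambda) = \sum_{\lambda \in \Omega_n} m(A, H, \lambda)\dim M(\lambda),$$
where the sum is restricted to $\Omega_n$ by hypothesis. Since $\lambda_{r+1} = 0$ for every $\lambda \in \Omega_n$, the set $\Omega_n$ consists of partitions with at most $r$ nonzero parts, so $|\Omega_n| \leqslant (n+1)^r$ is polynomial in $n$. Thus it will be enough to bound each product $m(A, H, \lambda) \dim M(\lambda)$ by $C_0 n^{C_0} d^n$ uniformly in $\lambda \in \Omega_n$.

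For the dimension factor, set $\alpha_i := \lambda_i/n$ for $1 \leqslant i \leqslant q$. Since $\sum_{i>q} \lambda_i \leqslant \sum_{i=q+1}^r \theta_i =: N$ is a fixed constant, the tail parts $\lambda_{q+1},\ldots,\lambda_r$ affect $\dim M(\lambda)$ only by a polynomial factor, and Stirling's formula applied to the standard estimate $\dim M(\lambda) \leqslant \frac{n!}{\lambda_1!\,\cdots\,\lambda_r!}$ yields
$$\dim M(\lambda) \leqslant C_1 n^{C_1} \Phi(\alpha_1, \ldots, \alpha_q)^n.$$
For the multiplicities I would invoke the polynomial bound on $m(A,H,\lambda)$ available for finite-dimensional algebras with generalized $H$-action (as used in~\cite{ASGordienko8} and its predecessors); combined with the estimate above, this gives $m(A,H,\lambda)\cdot \dim M(\lambda) \leqslant C_2 n^{C_2} \Phi(\alpha)^n$ uniformly in $\lambda\in\Omega_n$.

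It then remains to pass from the discrete to the continuous setting. For every $\lambda \in \Omega_n$, the point $\alpha=(\alpha_1, \ldots, \alpha_q)$ satisfies the constraints defining $\Omega$ up to an $O(1/n)$ error: the linear inequalities by direct substitution (the constants $\gamma_{i0}/n$ contribute $O(1/n)$), and $\sum_i \alpha_i \geqslant 1 - N/n$ from the tail bound. Renormalising to $\tilde\alpha_i := \alpha_i/\sum_j \alpha_j$ produces a point of $\Omega$ at distance $O(1/n)$ from $\alpha$, so by uniform continuity of the continuous function $\Phi$ on a compact neighbourhood of $\Omega$ we get $\Phi(\alpha)/d \leqslant 1 + O(1/n)$ and hence $\Phi(\alpha)^n \leqslant e^{O(1)} d^n$. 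Putting everything together yields $c_n^H(A) \leqslant Q(n)\, d^n$ for some polynomial $Q$, whence $\mathop{\overline\lim}_{n \to \infty} \sqrt[n]{c_n^H(A)} \leqslant d$.

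The main obstacle I anticipate is producing the polynomial bound on the multiplicities $m(A, H, \lambda)$: in full generality these can grow exponentially in $n$, and controlling them requires using finite-dimensionality of $A$ together with the fact that the generalized $H$-action factors through the finite-dimensional algebra $\End_F(A)$, as in~\cite{ASGordienko8}. A secondary technical nuisance is the bookkeeping for the tail parts $\lambda_{q+1}, \ldots, \lambda_r$: although each is bounded by the constant $\theta_i$, one must verify that their combinatorial contribution to both $\dim M(\lambda)$ and to the location of $\alpha$ in $\mathbb{R}^q$ is absorbed by the polynomial prefactor rather than the exponential base $d$.
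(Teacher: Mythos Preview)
Your approach is essentially the paper's own: the cocharacter decomposition, Stirling on $\dim M(\lambda)$ (absorbing the bounded tail parts $\lambda_{q+1},\ldots,\lambda_r$ into a polynomial factor), the polynomial multiplicity bound from~\cite{ASGordienko8}, and continuity of $\Phi$ to pass from $\Omega_n$ to $\Omega$. One minor slip: uniform continuity alone does not give the Lipschitz-type estimate $\Phi(\alpha)/d \leqslant 1 + O(1/n)$ (indeed $\Phi$ is not Lipschitz near faces where some $\alpha_i=0$), so the paper instead argues that for any $\varepsilon>0$ one has $\Phi(\alpha)<d+\varepsilon$ once $n$ is large, which already suffices for the $\limsup$.
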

\begin{proof}
Let $\lambda \vdash n$ such that $m(A,H,\lambda)\ne 0$.
By the hook formula, $\dim M(\lambda)=\frac{n!}{\prod_{i,j} h_{ij}}$
where $h_{ij}$ is the length of the hook with the edge in $(i,j)$
in the Young diagram $D_\lambda$. Hence
$\dim M(\lambda) \leqslant \frac{n!}{\lambda_1! \ldots \lambda_r!}$.
Note that $\left(x^x\right)'=\left(e^{x\ln x}\right)'=
(\ln x+1)e^{x\ln x}$ and $x^x$ is decreasing for $x\leqslant \frac{1}{e}$.
By the Stirling formula, for all sufficiently large $n$ we have \begin{equation}\begin{split}\label{EqMlambdaUpperFd}\dim M(\lambda) \leqslant \frac{C_1 n^{r_1}
\left(\frac{n}{e}\right)^n}{\left(\frac{\lambda_1}{e}\right)^{\lambda_1}\ldots
\left(\frac{\lambda_r}{e}\right)^{\lambda_r}}=C_1 n^{r_1}\left(\frac{1}
{\left(\frac{\lambda_1}{n}\right)^{\frac{\lambda_1}{n}}\ldots
\left(\frac{\lambda_r}{n}\right)^{\frac{\lambda_r}{n}}}\right)^n
\leqslant \\ C_1 n^{r_1} \left(\Phi\left(\frac{\lambda_1}{n}, \ldots, \frac{\lambda_q}{n}\right)\right)^n
\frac{n^{\theta_{q+1}+\ldots +\theta_r}}{\theta_{q+1}^{\theta_{q+1}} \ldots \theta_r^{\theta_r}}=C_2 n^{r_2} \left(\Phi\left(\frac{\lambda_1}{n}, \ldots, \frac{\lambda_q}{n}\right)\right)^n\end{split}\end{equation}
for some $C_1, C_2 > 0$ and $r_1, r_2 \in\mathbb R$ that do not depend on $\lambda_i$.

 Let $\varepsilon > 0$. Since $\Phi$ is continuous, there exists $\delta > 0$
such that for every $x$ from the domain of $\Phi$ such that the distance between
$x$ and $\Omega$ is less than $\delta$, we have $\Phi(x) < d + \varepsilon$.

Therefore, by~(\ref{EqMlambdaUpperFd}), there exists $n_0\in\mathbb N$ such that
for all $n \geqslant n_0$ and $\lambda\vdash n$ such that $m(A,H,\lambda)\ne 0$
 we have $ \dim M(\lambda) \leqslant C_2 n^{r_2} (d+\varepsilon)^n$.

 By~\cite[Theorem~5]{ASGordienko8},
there exist $C_3 > 0$, $r_3\in\mathbb Z_+$ such that
 $$\sum_{\lambda \vdash n} m(A,H,\lambda)
\leqslant C_3 n^{r_3}\text{ for all }n \in \mathbb N.$$

Hence $$ c^{H}_n(A) = \sum_{\lambda \vdash n} m(A,H,\lambda) \dim M(\lambda)
 \leqslant  C_2 C_3 n^{r_2+r_3} (d+\varepsilon)^n$$
 and $\mathop{\overline\lim}_{n\to\infty}\sqrt[n]{c_n^H(A)}
\leqslant d+\varepsilon$. Since $\varepsilon > 0$ is arbitrary, we get the lemma.
\end{proof}

\begin{lemma}\label{LemmaMaxTExample}
Let $q \in \mathbb N$, $q \geqslant 4$, $$\Omega = \left\lbrace (\alpha_1, \ldots, \alpha_q)\in \mathbb R^q \mathrel{\biggl|} \sum_{i=1}^q \alpha_i = 1,\ 
\alpha_1 \geqslant \alpha_2 \geqslant \ldots \geqslant \alpha_q\geqslant 0,\ \alpha_q
+\alpha_{q-1} \leqslant \alpha_1\right\rbrace.$$
Then $ d:=\max_{x\in \Omega} \Phi(x) = (q-3)+2\sqrt 2= q-0.1716\ldots$
\end{lemma}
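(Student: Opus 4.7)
The plan is to reduce the problem to maximizing the Shannon entropy $H(\alpha):=-\sum_{i=1}^q \alpha_i \ln \alpha_i$ on $\Omega$, since $\Phi=e^H$ (with the convention $0\ln 0=0$). As $H$ is strictly concave on the probability simplex and $\Omega$ is convex, the maximizer is unique. The unconstrained optimum $\alpha_i=1/q$ violates $\alpha_{q-1}+\alpha_q\leqslant\alpha_1$, so the constraint $\alpha_{q-1}+\alpha_q=\alpha_1$ must be active at the maximizer.

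Next I apply KKT with Lagrangian $L=H(\alpha)-\mu(\sum_i\alpha_i-1)-\nu(\alpha_{q-1}+\alpha_q-\alpha_1)$, $\nu\geqslant 0$, temporarily dropping the ordering constraints. Stationarity yields $\alpha_1=e^{\nu-1-\mu}$, $\alpha_i=e^{-1-\mu}=:c$ for $2\leqslant i\leqslant q-2$, and $\alpha_{q-1}=\alpha_q=e^{-1-\mu-\nu}$, so in particular $\alpha_1\alpha_{q-1}=c^2$. Writing $a:=\alpha_1$ and combining with the active constraint $\alpha_{q-1}+\alpha_q=\alpha_1$ forces $\alpha_{q-1}=\alpha_q=a/2$ and $c=a/\sqrt{2}$. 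The normalization $2a+(q-3)c=1$ then gives $a=\sqrt{2}/(2\sqrt{2}+q-3)$ and $c=1/(2\sqrt{2}+q-3)$.

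To compute the value I would exploit $c^2=a^2/2$ to obtain
\begin{equation*}
\Phi=a^{-a}\cdot c^{-(q-3)c}\cdot (a/2)^{-a}=(a^2/2)^{-a}\cdot c^{-(q-3)c}=c^{-2a-(q-3)c}=c^{-1}=2\sqrt{2}+q-3,
\end{equation*}
matching the claim. Since $a>a/\sqrt{2}>a/2>0$ for $q\geqslant 4$, the ordering constraints hold strictly at this KKT point, and the dual variable satisfies $2\nu=\ln(\alpha_1/\alpha_{q-1})=\ln 2>0$, confirming that KKT is met with the correct sign.

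The main technical obstacle is confirming global optimality rather than just stationarity. The cleanest route is to invoke strict concavity of $H$: since $\Omega$ is convex and $H$ is strictly concave, the maximum on $\Omega$ is unique, so the KKT point found above must realize it. Alternatively, one verifies directly that boundary configurations such as $\alpha_q=0$ reduce the problem to $q-1$ free variables and yield $\Phi\leqslant q-1<q-3+2\sqrt{2}$, and any boundary of an ordering constraint merely produces a more symmetric sub-problem whose optimum does not exceed the value above.
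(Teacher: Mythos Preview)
Your argument is correct. The paper proceeds by direct multivariable calculus: it eliminates $\alpha_1$, finds that the only interior critical point of $\Phi_0$ on a relaxed polytope $\Omega_0$ is the uniform distribution (which is infeasible), and therefore analyzes the boundary faces one by one, eventually parametrizing the face $\alpha_{q-1}+\alpha_q=\alpha_1$ and locating the critical point there by hand. Your route is structurally different: you recognize $\Phi=e^H$ with $H$ the Shannon entropy, exploit its strict concavity to reduce global optimality to a single KKT computation on the relaxation that drops the ordering constraints, and then simply verify that the resulting point happens to satisfy the ordering inequalities strictly, so it is optimal for $\Omega$ as well. The concavity argument buys you a clean justification of global (not merely local) optimality without the boundary case analysis, and your evaluation $\Phi=c^{-1}$ via $c^2=a^2/2$ is neater than the paper's explicit substitution. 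The paper's approach, by contrast, is more elementary in that it uses only single-variable calculus and does not invoke KKT or convex-optimization sufficiency, at the cost of being longer. One small point worth stating explicitly in your write-up: the passage from ``the unconstrained optimum violates the constraint'' to ``the constraint is active'' is really justified by your KKT analysis (the $\nu=0$ branch would force the uniform point, which is infeasible), not by the informal sentence alone; and the reason the relaxed optimum governs $\Omega$ is that the relaxed feasible set is a convex superset of $\Omega$ and the relaxed optimizer lands inside $\Omega$.
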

\begin{proof}
We express $\alpha_1$ in terms of $\alpha_2, \ldots, \alpha_q$
and consider $$\Phi_0(\alpha_2, \ldots, \alpha_q):=\Phi\left(1-\sum_{i=2}^q \alpha_i, \alpha_2, 
\ldots, \alpha_q\right) = \frac{1}{\left(1-\sum_{i=2}^q \alpha_i\right)^{\left(1-\sum_{i=2}^q \alpha_i\right)} \alpha_2^{\alpha_2}
\ldots \alpha_q^{\alpha_q}}$$ on the segment
$$\Omega_0 = \left\lbrace
(\alpha_2, \ldots, \alpha_q) \mathrel{\Bigl|} \alpha_2\geqslant 0, \ldots,\ \alpha_q\geqslant 0,\ \alpha_2+\ldots+\alpha_{q-2}+ 2\alpha_{q-1} +2\alpha_q \leqslant 1 \right\rbrace.$$
Note that we have weakened the restrictions on $\alpha_i$.
However, we will see that $\max_{x\in \Omega} \Phi(x)=\max_{x\in \Omega_0} \Phi_0(x)$.

We have $\Phi_0(\alpha_2, \ldots, \alpha_q)=e^{-\left(1-\sum_{i=2}^q \alpha_i\right)\ln\left(1-\sum_{i=2}^q \alpha_i\right)-\sum_{i=2}^q(\alpha_i \ln \alpha_i) }$ and $$\frac{\partial \Phi_0}{\partial \alpha_k}(\alpha_2, \alpha_3,\ldots,\alpha_q) =
\left(\ln\left(1-\sum_{i=2}^q \alpha_i\right)-\ln\alpha_k \right)e^{-\left(1-\sum_{i=2}^q \alpha_i\right)\ln\left(1-\sum_{i=2}^q \alpha_i\right)-\sum_{i=2}^q(\alpha_i \ln \alpha_i) }.$$ Hence the only critical point is $(\alpha_2,\alpha_3,\ldots,\alpha_q)=\left(\frac{1}{q},\ldots, \frac{1}{q}\right)\notin \Omega_0$.
Therefore, $\Phi_0$ takes its maximal values on the border $\partial\Omega_0$ of $\Omega_0$.
Note that $\partial \Omega_0 = \Upsilon \cup \bigcup_{i=2}^q \Omega_i$
where $\Omega_i=\lbrace (\alpha_2,\ldots,\alpha_q)\in \Omega_0 \mid \alpha_i=0 \rbrace$
and $\Upsilon = \lbrace (\alpha_2,\ldots,\alpha_q)\in \Omega_0 \mid
 \alpha_2+\ldots+\alpha_{q-2}+ 2\alpha_{q-1} +2\alpha_q = 1 \rbrace$.
 Determining the critical points once again, we get $\Phi_0(x) \leqslant q -1$
 for all $x\in \bigcup_{i=2}^q \Omega_i$.

Consider $\Phi_0$ on $\Upsilon$. We express $\alpha_2$ in terms of $\alpha_3,\ldots,\alpha_q$
and define
\begin{equation*}\begin{split}\Phi_1(\alpha_3,\ldots,\alpha_q) = \Phi_0(1-\alpha_3-\ldots-\alpha_{q-2}-2\alpha_{q-1}-2\alpha_q, \alpha_3, \ldots, \alpha_q)=\\ \frac{1}{(\alpha_{q-1}+\alpha_q)^{\alpha_{q-1}+\alpha_q}
(1-\alpha_3-\ldots-\alpha_{q-2}-2\alpha_{q-1}-2\alpha_q)^{1-\alpha_3-\ldots-
\alpha_{q-2}-2\alpha_{q-1}-2\alpha_q}
\alpha_3^{\alpha_3}\ldots \alpha_{q}^{\alpha_q}}=\\
e^{-(\alpha_{q-1}+\alpha_q)\ln(\alpha_{q-1}+\alpha_q)
-(1-\alpha_3-\ldots-\alpha_{q-2}-2\alpha_{q-1}-2\alpha_q)
\ln(1-\alpha_3-\ldots-\alpha_{q-2}-2\alpha_{q-1}-2\alpha_q)
-\alpha_3\ln\alpha_3-\ldots-\alpha_q\ln \alpha_q}
\end{split}\end{equation*} on $$\Upsilon_1=\left\lbrace (\alpha_3,\ldots,\alpha_q)
\mid 1-\alpha_3-\ldots-\alpha_{q-2}-2\alpha_{q-1}-2\alpha_q \geqslant 0,\ \alpha_3 \geqslant 0, \ldots,\ \alpha_q \geqslant 0\right\rbrace.$$
Then
\begin{equation*}\begin{split}\frac{\partial \Phi_1}{\partial \alpha_i}(\alpha_3,\ldots,\alpha_q)=
(\ln(1-\alpha_3-\ldots-\alpha_{q-2}-2\alpha_{q-1}-2\alpha_q)-\ln\alpha_i)\cdot \\
e^{-(\alpha_{q-1}+\alpha_{q})\ln(\alpha_{q-1}+\alpha_{q})
-(1-\alpha_3-\ldots-
\alpha_{q-2}-2\alpha_{q-1}-2\alpha_q)
\ln(1-\alpha_3-\ldots-\alpha_{q-2}-2\alpha_{q-1}-2\alpha_q)-
\alpha_3\ln\alpha_3-\ldots-\alpha_q\ln \alpha_q}
\end{split}\end{equation*}
for $i=3,\ldots,q-2$ and
\begin{equation*}\begin{split}\frac{\partial \Phi_1}{\partial \alpha_i}(\alpha_3,\ldots,\alpha_q)=
(2\ln(1-\alpha_3-\ldots-\alpha_{q-2}-2\alpha_{q-1}-2\alpha_q)
-\ln(\alpha_{q-1}+\alpha_q)-\ln\alpha_i)\cdot \\
e^{-(\alpha_{q-1}+\alpha_q)\ln(\alpha_{q-1}+\alpha_q)
-(1-\alpha_3-\ldots-\alpha_{q-2}
-2\alpha_{q-1}-2\alpha_q)
\ln(1-\alpha_3-\ldots-\alpha_{q-2}-2\alpha_{q-1}-2\alpha_q)-
\alpha_3\ln\alpha_3-\ldots-\alpha_q\ln \alpha_q},
\end{split}\end{equation*}
for $i=q-1,\,q$.
Therefore, $(\tilde\alpha_3, \ldots, \tilde\alpha_q)$ where $\tilde\alpha_3=\ldots=\tilde\alpha_{q-2}=\frac{\sqrt 2}{4+(q-3)\sqrt 2}$
and $\tilde\alpha_{q-1}=\tilde\alpha_q=\frac{1}{4+(q-3)\sqrt 2}$, is
the critical point of $\Phi_1$. Hence $\Phi_1(\tilde\alpha_3, \ldots, \tilde\alpha_q)=(q-3)+2\sqrt 2= q-0.1716\ldots$
is the maximum of $\Phi_1$ on $\Upsilon_1$.

Now we return to the original variables.
Since $\tilde \alpha_1 = \tilde \alpha_{q-1}+\tilde \alpha_q = \frac{2}{4+(q-3)\sqrt 2}$
and $$\tilde \alpha_2 = 1-\alpha_3-\ldots-\alpha_{q-2}-2\alpha_{q-1}-2\alpha_q = \frac{\sqrt 2}{4+(q-3)\sqrt 2},$$
we have $\tilde\alpha_1 \geqslant \tilde\alpha_2 \geqslant \ldots \geqslant \tilde\alpha_q$.
Therefore $d=\Phi(\tilde\alpha_1, \ldots, \tilde\alpha_q)= (q-3)+2\sqrt 2= q-0.1716\ldots$ is the maximum
of $\Phi$ on $\Omega$.
\end{proof}

\begin{lemma}\label{LemmaTExampleIneqLambda}
Let $A = M_2(F)\oplus W$ (direct sum of left ideals) be an algebra 
over a field $F$ of characteristic $0$ and let $T=\lbrace t_1, t_2\rbrace$ be a semigroup of two elements.
Suppose there exists a linear isomorphism
$\varphi \colon W \mathrel{\widetilde{\rightarrow}} \langle \mathcal B_0\rangle_F \subset M_2(F)$
where ${\mathcal B}_0 \subseteq \lbrace e_{11}, e_{12}, e_{22} \rbrace$ is a subset such that $e_{12} \in \mathcal B_0$. Suppose the $T$-grading on $A$ is defined by $A^{(t_1)}=(M_2(F),0)$ and $A^{(t_2)}=\lbrace (\varphi(a), a) \mid a\in W\rbrace$.
Suppose $W M_2(F)=0$ and one of the following three conditions holds:
\begin{enumerate}
\item $M_2(F) W=0$ and $\varphi$ is a homomorphism of algebras;
\item $M_2(F) W=0$ and $W^2=0$;
\item $W^2=0$ and $\varphi$ is a homomorphism of left $M_2(F)$-modules.
\end{enumerate}
Then if $m(A, (FT)^*, \lambda)\ne 0$ for some $\lambda\vdash n$, $n\in\mathbb N$,
we have $\lambda_{q+1} = 0$ and $\lambda_{q-1}+\lambda_q \leqslant \lambda_1 + 1$
where $q :=\dim A$.
\end{lemma}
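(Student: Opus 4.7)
The plan is to convert the hypothesis $m(A,(FT)^*,\lambda)\neq 0$ into a concrete non-zero evaluation of $e^*_{T_\lambda}f$ on homogeneous basis vectors of $A$, and then to read off both inequalities directly from the annihilation structure of $W$.

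By definition the hypothesis yields a multilinear $(FT)^*$-polynomial $f\in P^{(FT)^*}_n$ and a Young tableau $T_\lambda$ with $e^*_{T_\lambda}f\notin\Id^{(FT)^*}(A)$. Using~(\ref{EqIdentityHFiniteSupp}) I would expand every $H$-label into the two projectors $h_{t_1},h_{t_2}$, which effectively assigns a fixed $T$-degree to each variable; multilinearity then reduces the problem to substitutions sending each variable to one of the $q=\dim A$ homogeneous basis vectors of $A$, namely the four matrix units $(e_{ij},0)$ of $A^{(t_1)}$ and the at most three vectors $(\varphi(b),b)$, $b\in\mathcal B_0$, spanning $A^{(t_2)}=W$; in particular $q\in\{5,6,7\}$. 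The inequality $\lambda_{q+1}=0$ then follows by the standard column-antisymmetrisation argument: the factor $b_{T_\lambda}$ in $e^*_{T_\lambda}$ forces distinct basis vectors within each column, so no column contains more than $q$ cells.

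For the second inequality the decisive observation is that $W$ left-annihilates $A$: since $WM_2(F)=0$ is assumed and in each of the three cases either $W^2=0$ or $M_2(F)W=0$ disposes of the remaining piece, one obtains $W\cdot A=0$ (and in cases~(1), (2) even the symmetric $A\cdot W=0$). Consequently any product of basis vectors containing two or more factors from $W$ vanishes, and since every monomial of $e^*_{T_\lambda}f$ uses the same multiset of substitution values, a non-zero evaluation can involve at most one $W$-vector. In cases~(1) and~(2) the two-sided annihilation is stronger and forces every non-zero substitution to lie entirely in $M_2(F)$ or entirely in $W$; because $\dim W\leqslant 3$, every column has height at most $4$, so $\lambda_5=0$ and $\lambda_{q-1}+\lambda_q\leqslant\lambda_4\leqslant\lambda_1$.

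The critical case is~(3), where $M_2(F)W$ is typically non-zero. Here at most one cell of $T_\lambda$ can carry a $W$-value; that unique cell sits in exactly one column, which can then accommodate up to $4+1=5$ distinct basis vectors (the four $M_2(F)$-units together with the one $W$-vector), whereas every other column is populated only by $M_2(F)$-units and has height at most $4$. This yields $\lambda_5\leqslant 1$ and $\lambda_6=0$, from which the required bound $\lambda_{q-1}+\lambda_q\leqslant\lambda_1+1$ follows by inspection of the three values $q=5,6,7$; the additive $+1$ is saturated precisely when $q=5$, $\lambda_4=\lambda_1$, and the exceptional height-$5$ column is present. The main obstacle I anticipate is bookkeeping rather than conceptual: one must confirm that the ``at most one $W$-cell'' conclusion, which a priori concerns a single non-zero multilinear substitution, translates faithfully into the stated column-height constraints on $T_\lambda$ itself.
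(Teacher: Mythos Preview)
Your argument for $\lambda_{q+1}=0$ is fine and matches the paper. The second inequality, however, rests on a misidentification of subspaces: you repeatedly treat the graded component $A^{(t_2)}=\{(\varphi(a),a)\mid a\in W\}$ as if it were $W=(0,W)$. These are different. The homogeneous basis vectors of degree $t_2$ are $(\varphi(b),b)$ with $b\in\mathcal B_0$, and their first coordinate is a \emph{nonzero} matrix unit; the annihilation properties of $W$ say nothing about products of such elements. Concretely, in case~(1) with $\mathcal B_0=\{e_{11},e_{12},e_{22}\}$ one has $(e_{11},e_{11})^k=(e_{11},e_{11})\ne 0$ for every $k$, so a nonzero product can contain arbitrarily many $A^{(t_2)}$-factors. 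Your conclusion that $\lambda_5=0$ in cases~(1),~(2) is therefore false; indeed Lemma~\ref{LemmaAltT1} constructs, for the very algebra of case~(1) with $q=7$, partitions with $\lambda_7>0$ and $m(A,(FT)^*,\lambda)\ne 0$. The same objection kills the case~(3) argument: for instance $(e_{12},\varphi^{-1}(e_{12}))(e_{22},\varphi^{-1}(e_{22}))=(e_{12},\varphi^{-1}(e_{12}))\ne 0$, so ``at most one $W$-cell'' is wrong there too.

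The paper's proof is genuinely different in strategy. It introduces the weight $\theta\colon\mathcal B\to\mathbb Z$, $\theta(e_{ij},\cdot)=j-i$, which is additive on nonzero products and bounded between $-1$ and $1$ on any nonzero monomial value. Because $e_{12}\in\mathcal B_0$ but $e_{21}\notin\mathcal B_0$, the full basis $\mathcal B$ has total weight $+1$ and any $(q-1)$-element subset has weight $\geqslant 0$; hence the first $\lambda_{q-1}$ columns of an alternating substitution contribute at least $\lambda_q$ to the total weight, and since the only basis vector with $\theta=-1$ is $(e_{21},0)$ (at most one per column), one needs at least $\lambda_q-1$ further columns to keep the total $\leqslant 1$. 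That yields $\lambda_1-\lambda_{q-1}\geqslant\lambda_q-1$. You would need to replace the annihilation idea entirely with an argument of this type.
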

\begin{proof} In order to prove that $m(A, (FT)^*, \lambda)= 0$
for some $\lambda \vdash n$, it is sufficient to show that $e^*_{T_\lambda}f \equiv 0$
for every $f\in P^{(FT)^*}_n$ and a Young tableau~$T_\lambda$. Note that $e^*_{T_\lambda}f$ is alternating in the variables of each column of $T_\lambda$. Since $f$ is multilinear, it is sufficient to substitute only basis elements. However $\dim A = q$ and if $\lambda_{q+1} > 0$, then at least two of the basis elements corresponding to the variables of the first column coincide and $e^*_{T_\lambda}f$ vanish. Hence if $\lambda_{q+1} > 0$, we have $e_{T_\lambda}f \equiv 0$.

Consider in $A$ the homogeneous basis $$\mathcal B=\lbrace (e_{11},0), (e_{12},0), 
(e_{21},0), (e_{22},0)\rbrace \cup \lbrace (a, \varphi^{-1}(a) \mid a \in \mathcal B_0 \rbrace.$$
Note that the product of any two elements of $\mathcal B$ is either $0$ or again an element of $\mathcal B$.
Define the function $\theta \colon \mathcal B \to \mathbb Z$ by $\theta(e_{ij}, \varphi^{-1}(e_{ij}))=\theta(e_{ij},0)=j-i$. Let $a_1, \ldots, a_k \in\mathcal B$.
If $a_1 \ldots a_k \ne 0$, then \begin{equation}\label{EqTExampleThetaAk}-1 \leqslant \sum_{i=1}^k\theta(a_i)=\theta(a_1 \ldots a_k) \leqslant 1.\end{equation}
Note that $\sum_{b\in \mathcal B} \theta(b)=1$ and $\sum_{i=1}^{q-1} \theta(a_i) \geqslant 0$
for any different $a_i \in \mathcal B$. If $m(A, (FT)^*, \lambda)\ne 0$,
then $e^*_{T_\lambda}f \not\equiv 0$ for some $f\in P^{(FT)^*}_n$ and $e^*_{T_\lambda}f$
does not vanish under substitution of some basis elements $a_1, \ldots, a_n$. Again, $e^*_{T_\lambda}f$
is alternating in the variables of each column. In each of the first $\lambda_q$ columns we have $q$ boxes
and in each of the next $(\lambda_{q-1}-\lambda_q)$ columns we have $(q-1)$ boxes. Therefore, the impact
in $\sum_{i=1}^n \theta(a_i)$ of the basis elements corresponding to the first $\lambda_{q-1}$ columns is at least $\lambda_q$.  Since $\theta(a)=-1$ only for $a=(e_{21},0)$
and we cannot substitute more than one such element for the variables of the same column, by~(\ref{EqTExampleThetaAk}) there must be at least $(\lambda_q-1)$ other columns. Hence $\lambda_1-\lambda_{q-1} \geqslant \lambda_q-1$
and we get the lemma.
\end{proof}

\begin{lemma}\label{LemmaTExampleLower}
Let $A$ be an algebra from Lemma~\ref{LemmaTExampleIneqLambda}.
Suppose that for every $\lambda \vdash n$, $n\in\mathbb N$, such that
$\lambda_{q-1}+\lambda_q \leqslant \lambda_1$, we have
$m(A, (FT)^*, \lambda)\ne 0$. Then there exists  $$\lim\limits_{n\to \infty} \sqrt[n]{c_n^{T\text{-}\mathrm{gr}}(A)} =(q-3)+2\sqrt 2= q - 0.1716\ldots$$
\end{lemma}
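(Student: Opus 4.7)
The plan is to sandwich $c_n^{T\text{-}\mathrm{gr}}(A)$ between matching exponential bounds, both governed by the optimization problem solved in Lemma~\ref{LemmaMaxTExample}. By Lemma~\ref{LemmaCnGrCnGenH} it is equivalent to work throughout with $c_n^{(FT)^*}(A)$.

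For the upper bound, Lemma~\ref{LemmaTExampleIneqLambda} forces $m(A,(FT)^*,\lambda)=0$ unless $\lambda_{q+1}=0$ and $\lambda_1+1-\lambda_{q-1}-\lambda_q\geqslant 0$. These two linear constraints fit directly into the polyhedron framework of Lemma~\ref{LemmaTExampleUpperFd} with $r=q$ and a single inequality specified by $\gamma_{1,0}=1$, $\gamma_{1,1}=1$, $\gamma_{1,q-1}=\gamma_{1,q}=-1$. The associated continuous polyhedron is exactly the set $\Omega$ of Lemma~\ref{LemmaMaxTExample} (the additive constant is washed out upon passage to $\lambda/n$), so Lemmas~\ref{LemmaTExampleUpperFd} and~\ref{LemmaMaxTExample} combined yield
\[
\limsup_{n\to\infty}\sqrt[n]{c_n^{T\text{-}\mathrm{gr}}(A)}\leqslant (q-3)+2\sqrt 2.
\]

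For the lower bound, let $\tilde\alpha\in\Omega$ be the maximizer from Lemma~\ref{LemmaMaxTExample}, so that all its coordinates are strictly positive and $\tilde\alpha_1=\tilde\alpha_{q-1}+\tilde\alpha_q$. For each sufficiently large $n$ I would build a partition $\mu=\mu^{(n)}\vdash n$ of length at most $q$ satisfying the strict constraint $\mu_{q-1}+\mu_q\leqslant\mu_1$ and $\mu_j/n\to\tilde\alpha_j$: set $\mu_j:=\lfloor n\tilde\alpha_j\rfloor$, drop one cell from $\mu_q$ if necessary to avoid $\mu_{q-1}+\mu_q=\mu_1+1$, and distribute the $O(1)$ leftover cells into the first row. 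The hypothesis of the lemma then gives $m(A,(FT)^*,\mu)\geqslant 1$, whence $c_n^{(FT)^*}(A)\geqslant\dim M(\mu)$. To bound $\dim M(\mu)$ from below I would use Frobenius's determinantal formula
\[
\dim M(\mu)=\frac{n!\prod_{1\leqslant i<j\leqslant q}(\mu_i-\mu_j+j-i)}{\prod_{i=1}^q(\mu_i+q-i)!},
\]
together with the trivial estimates $\prod_{1\leqslant i<j\leqslant q}(\mu_i-\mu_j+j-i)\geqslant 1$ and $(\mu_i+q-i)!\leqslant(n+q)^{q-i}\mu_i!$, and Stirling's formula applied to the resulting multinomial coefficient $n!/\prod_i\mu_i!$; this produces $\dim M(\mu^{(n)})\geqslant Cn^{-r}\Phi(\mu^{(n)}/n)^n$ for some $C>0$ and $r\in\mathbb R$ depending only on $q$. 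Continuity of $\Phi$ together with $\mu^{(n)}/n\to\tilde\alpha$ then gives
\[
\liminf_{n\to\infty}\sqrt[n]{c_n^{T\text{-}\mathrm{gr}}(A)}\geqslant\Phi(\tilde\alpha)=(q-3)+2\sqrt 2,
\]
which combined with the upper bound proves the claim.

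The main obstacle is the bookkeeping in constructing $\mu^{(n)}$: the maximizer $\tilde\alpha$ lies on the boundary facet $\alpha_1=\alpha_{q-1}+\alpha_q$, so integer rounding of $n\tilde\alpha_j$ can violate the inequality $\mu_{q-1}+\mu_q\leqslant\mu_1$ by exactly one, and the correction must redistribute a bounded number of cells without spoiling either the monotonicity $\mu^{(n)}_1\geqslant\dots\geqslant\mu^{(n)}_q$ or the convergence $\mu^{(n)}/n\to\tilde\alpha$.
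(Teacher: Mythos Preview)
Your proposal is correct and follows essentially the same route as the paper: use Lemmas~\ref{LemmaCnGrCnGenH}, \ref{LemmaTExampleUpperFd}, \ref{LemmaMaxTExample}, \ref{LemmaTExampleIneqLambda} for the upper bound, and for the lower bound build a partition $\mu^{(n)}$ approximating the maximizer $\tilde\alpha$, invoke the hypothesis to get $m(A,(FT)^*,\mu^{(n)})\ne 0$, and bound $\dim M(\mu^{(n)})$ below via the hook (equivalently, Frobenius) formula and Stirling.

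One small simplification you are missing: the ``main obstacle'' you flag is not actually present. The paper sets $\mu_i=\lfloor n\tilde\alpha_i\rfloor$ for $2\leqslant i\leqslant q$ and $\mu_1=n-\sum_{i\geqslant 2}\mu_i$. Then automatically $\mu_1\geqslant n\tilde\alpha_1=n(\tilde\alpha_{q-1}+\tilde\alpha_q)\geqslant\lfloor n\tilde\alpha_{q-1}\rfloor+\lfloor n\tilde\alpha_q\rfloor=\mu_{q-1}+\mu_q$, so the constraint $\mu_{q-1}+\mu_q\leqslant\mu_1$ holds on the nose with no correction needed; the floor only helps on the right, and dumping the surplus into $\mu_1$ only helps on the left. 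Your extra ``drop one cell'' step is therefore unnecessary.
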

\begin{proof}
Let $$\Omega = \left\lbrace (\alpha_1, \ldots, \alpha_q)\in \mathbb R^q \mathrel{\biggl|} \sum_{i=1}^q \alpha_i = 1,\ 
\alpha_1 \geqslant \alpha_2 \geqslant \ldots \geqslant \alpha_q\geqslant 0,\ \alpha_q
+\alpha_{q-1} \leqslant \alpha_1\right\rbrace.$$
By Lemma~\ref{LemmaMaxTExample},
$ d:=\max_{x\in \Omega} \Phi(x) = (q-3)+2\sqrt 2= q-0.1716\ldots$
Denote by $(\alpha_1, \ldots, \alpha_q) \in \Omega$ such a point that
$\Phi(\alpha_1, \ldots, \alpha_q)=d$.
For every $n\in\mathbb N$ define $\mu\vdash n$ by
$\mu_i = [\alpha_i n]$ for $2\leqslant i \leqslant q$
and $\mu_1 =  n-\sum_{i=1}^q \mu_i$.
For every $\varepsilon > 0$ there exists $n_0\in\mathbb N$
such that for every $n\geqslant n_0$
we have $\Phi\left(\frac{\mu_1}{n},\ldots,\frac{\mu_q}{n}\right) > d-\varepsilon$.
 By the assumptions of the lemma, $m(A,(FT)^*,\mu) \ne 0$
 and by the hook and the Stirling formulas, there exist $C_1 > 0$ and $r_1\in\mathbb R$ such that
 we have \begin{equation}\begin{split} c^{(FT)^*}_n(A) \geqslant \dim M(\mu) = \frac{n!}{\prod_{i,j} h_{ij}}
  \geqslant \frac{n!}{(\mu_1+q-1)! \ldots (\mu_q+q-1)!} \geqslant \\
  \frac{n!}{n^{q(q-1)}\mu_1! \ldots \mu_q!} \geqslant
  \frac{C_1 n^{r_1} 
\left(\frac{n}{e}\right)^n}{\left(\frac{\mu_1}{e}\right)^{\mu_1}\ldots
\left(\frac{\mu_q}{e}\right)^{\mu_q}}\geqslant \\ C_1 n^{r_1}\left(\frac{1}
{\left(\frac{\mu_1}{n}\right)^{\frac{\mu_1}{n}}\ldots
\left(\frac{\mu_q}{n}\right)^{\frac{\mu_q}{n}}}\right)^n \geqslant C_1 n^{r_1}
 (d-\varepsilon)^n.\end{split}\end{equation}
 Hence $\mathop{\underline\lim}_{n\to\infty}\sqrt[n]{c_n^{(FT)^*}(A)}
\geqslant d-\varepsilon$. Since $\varepsilon > 0$ is arbitrary, $\mathop{\underline\lim}_{n\to\infty}\sqrt[n]{c_n^{(FT)^*}(A)}
\geqslant d$. Now Lemmas~\ref{LemmaCnGrCnGenH}, \ref{LemmaTExampleUpperFd}, \ref{LemmaMaxTExample}, and \ref{LemmaTExampleIneqLambda}
finish the proof.
\end{proof}

\section{A $T_1$-graded algebra with a non-integer graded PI-exponent}

\begin{theorem}\label{TheoremT1GradFractPI}
Let $A = M_2(F)\oplus \UT_2(F)$ (direct sum of ideals) where $F$ is a field of characteristic $0$. Define a $T_1$-grading on $A$ by $A^{(0)}=(M_2(F),0)$, $A^{(1)}=\lbrace (\varphi(a), a) \mid a \in \UT_2(F) \rbrace$ where $\varphi \colon \UT_2(F) \hookrightarrow M_2(F)$ is the natural embedding. In other words, $A$ is an algebra from Example~\ref{ExampleT1} for $k=2$. Then there exists  $\lim\limits_{n\to \infty} \sqrt[n]{c_n^{T_1\text{-}\mathrm{gr}}(A)} =4+2\sqrt 2= 6.8284\ldots$
\end{theorem}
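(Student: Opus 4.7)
The plan is to apply Lemma~\ref{LemmaTExampleLower} with $T = T_1$. First I verify that $A$ fits the framework of Lemma~\ref{LemmaTExampleIneqLambda}: set $W := \UT_2(F)$ and $\mathcal{B}_0 := \{e_{11}, e_{12}, e_{22}\}$, so that $\varphi$ is the natural algebra embedding $\UT_2(F) \hookrightarrow M_2(F)$; because $A = M_2(F) \oplus \UT_2(F)$ is a direct sum of two-sided ideals, $M_2(F)W = WM_2(F) = 0$, so condition~(1) of Lemma~\ref{LemmaTExampleIneqLambda} is satisfied. Here $q := \dim A = 7$, hence the target limit $(q-3)+2\sqrt{2}$ equals $4+2\sqrt{2}$, and the upper bound $\limsup_{n\to\infty}\sqrt[n]{c_n^{T_1\text{-}\mathrm{gr}}(A)} \leqslant 4+2\sqrt{2}$ is already delivered by Lemmas~\ref{LemmaCnGrCnGenH}, \ref{LemmaTExampleUpperFd}, \ref{LemmaMaxTExample}, and~\ref{LemmaTExampleIneqLambda}.

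By Lemma~\ref{LemmaTExampleLower} it therefore suffices to show that $m(A,(FT_1)^*,\lambda) \ne 0$ for every $\lambda\vdash n$ with $\lambda_{q+1}=0$ and $\lambda_{q-1}+\lambda_q \leqslant \lambda_1$. For each such $\lambda$ I would produce a Young tableau $T_\lambda$ and a multilinear $(FT_1)^*$-polynomial $f\in P_n^{(FT_1)^*}$ whose symmetrization $e^*_{T_\lambda}f$ does not vanish identically on $A$. Using the idempotents $h_0, h_1 \in (FT_1)^*$ one can tag each variable so that only elements of the prescribed homogeneous component of $A$ survive substitution; this lets us regard $f$ as a product of alternating column-blocks with a prescribed type for each variable. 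The substitutions assign distinct elements of the homogeneous basis $\mathcal{B}$ from the proof of Lemma~\ref{LemmaTExampleIneqLambda} to the variables of each column: all seven elements of $\mathcal{B}$ go into each column of length~$7$; six elements (dropping one of $\theta$-value~$0$) go into each column of length~$6$; and a single fixed element of $\theta$-value~$1$, such as $(e_{12},\varphi^{-1}(e_{12}))$, is inserted into each of the $\lambda_1 - \lambda_2$ excess first-row boxes.

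The main obstacle is the combinatorial check that, for every admissible $\lambda$, the order of basis elements within each column can be chosen so that the row-wise product of the substitutions equals a nonzero matrix unit of $A$, and so that the antisymmetrizations over columns do not cancel this leading term. The first requirement reduces to saturating the inequality~(\ref{EqTExampleThetaAk}): the single element $(e_{21},0)$ of $\theta$-value $-1$ must be inserted at the one position in each long column compatible with matrix-unit multiplication, which is possible precisely when $\lambda_{q-1}+\lambda_q \leqslant \lambda_1$, the filler boxes absorbing the remaining $\theta$-deficit. The second requirement follows from the fact that the $(FT_1)^*$-action separates the two copies of $e_{11}, e_{12}, e_{22}$ sitting in $A^{(0)}$ and $A^{(1)}$, so that any permutation of a column which departs from the prescribed assignment either produces a zero product in $A$ or lands in a different homogeneous component and therefore cannot contribute to the coefficient of the leading matrix unit. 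Once the nonvanishing of $e^*_{T_\lambda}f$ is verified for all admissible $\lambda$, Lemma~\ref{LemmaTExampleLower} yields $\lim_{n\to\infty}\sqrt[n]{c_n^{T_1\text{-}\mathrm{gr}}(A)} = 4+2\sqrt{2}$.
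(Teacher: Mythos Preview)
Your approach is the same as the paper's: invoke Lemma~\ref{LemmaTExampleLower} after verifying its multiplicity hypothesis, which the paper states separately as Lemma~\ref{LemmaAltT1}. The paper's execution differs from your sketch in two implementation details worth noting: it works with $e_{T_\lambda}=a_{T_\lambda}b_{T_\lambda}$ rather than $e^*_{T_\lambda}$, and it builds $f$ as a product of twelve explicit column-alternating blocks $f_1,\ldots,f_{12}$, one type for each column height $1,\ldots,7$ and each $\theta$-balancing role (your sketch only treats heights $7$, $6$, and $1$, but the intermediate heights must also be handled and split into ``compensating'' and ``neutral'' sub-blocks); with this choice $b_{T_\lambda}f$ is automatically a scalar multiple of $f$, so the entire non-cancellation argument reduces to showing that any row permutation in $a_{T_\lambda}$ that moves a variable between blocks with different substituted values forces some variable tagged $h_0$ to receive an $A^{(1)}$-element (or vice versa) and hence vanishes.
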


To prove Theorem~\ref{TheoremT1GradFractPI}, we need the following lemma.  We omit $\varphi$
for shortness and write $(e_{ij}, e_{ij})$ instead of $(e_{ij}, \varphi^{-1}(e_{ij}))$.
\begin{lemma}\label{LemmaAltT1}
Let $\lambda \vdash n$, $n\in\mathbb N$, $\lambda_8 = 0$, and $\lambda_6+\lambda_7 \leqslant \lambda_1$.
Then $m(A,(FT_1)^*,\lambda) \ne 0$.
\end{lemma}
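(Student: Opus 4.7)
The plan is to exhibit, for each such $\lambda$, an explicit multilinear $H$-polynomial $f\in P^{(FT_1)^*}_n$ (with $H=(FT_1)^*$) and a substitution of basis elements of $A$ into $f$ for which $e^*_{T_\lambda}f$ is non-zero. Since $\dim A=7$ and $\lambda_8=0$, every column of $D_\lambda$ fits inside the $7$-element basis
$$\mathcal B=\{(e_{11},0),(e_{12},0),(e_{21},0),(e_{22},0),(e_{11},e_{11}),(e_{12},e_{12}),(e_{22},e_{22})\},$$
whose $\theta$-values are $0,1,-1,0,0,1,0$ respectively (following the proof of Lemma~\ref{LemmaTExampleIneqLambda}). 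I would fill $D_\lambda$ column by column: in each of the $\lambda_7$ full-height columns, place all seven elements of $\mathcal B$ in a fixed order, with $(e_{21},0)$ (the unique $\theta=-1$ element) pinned to a specific row; in each of the $\lambda_6-\lambda_7$ columns of height $6$, drop $(e_{21},0)$ and use the remaining six elements; in shorter columns, drop further $\theta=0$ elements. Since all entries in each column are pairwise distinct, the column anti-symmetrizer $b_{T_\lambda}$ does not trivially annihilate the resulting substitution.

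Next I would arrange the rows so that the natural row-major product in each row collapses, via the multiplication table of $\mathcal B$, to a prescribed non-zero basis element such as $(e_{12},0)$. The hypothesis $\lambda_6+\lambda_7\leqslant \lambda_1$ is used exactly here: it supplies enough short columns at the tail of row $1$ to act as ``spacers'' that absorb the $\theta=+1$ contributions coming from the $(e_{12},\cdot)$ entries sitting above each $(e_{21},0)$, so that the partial row-$1$ products actually converge to the target element. Define $f$ as the row-major monomial $x_1^{h_{g_1}}\cdots x_n^{h_{g_n}}$ whose $H$-labels $g_i\in\{0,1\}$ match the $T_1$-degree of the basis element placed in the $i$-th box, and let $T_\lambda$ be the corresponding standard row-major tableau.

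Then $e^*_{T_\lambda}f=b_{T_\lambda}a_{T_\lambda}f$, evaluated on the chosen substitution, is a signed sum indexed by $(\pi,\sigma)\in R_{T_\lambda}\times C_{T_\lambda}$. By~(\ref{EqIdentityHFiniteSupp}), any permutation moving a variable to a position of the wrong $T_1$-degree contributes zero, killing the vast majority of terms; among the rest, the very restrictive multiplication rules of $\mathcal B$ (notably $W\cdot M_2(F)=0$) annihilate all permutations that disturb the carefully balanced row products, leaving only an explicit small class of $(\pi,\sigma)$ producing scalar multiples of $(e_{12},0)$. The main obstacle will be to verify that this reduced signed sum does not miraculously cancel; I expect this to be settled by an orbit-stabiliser argument showing that the surviving column permutations act only by permuting basis elements of the same $\theta$-type within each column (which either preserves the row products or sends them to zero), so that the total coefficient is a non-zero positive integer.
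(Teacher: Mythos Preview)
Your plan has a genuine gap in the $\theta$-accounting. Recall from the proof of Lemma~\ref{LemmaTExampleIneqLambda} that any nonzero product $a_1\cdots a_n$ of basis elements satisfies $\sum_i\theta(a_i)\in\{-1,0,1\}$; this sum is permutation-invariant, so it constrains the \emph{multiset} of elements placed in $D_\lambda$, independently of any reordering performed by $e^*_{T_\lambda}$. In your scheme you drop the unique $\theta=-1$ element $(e_{21},0)$ from every column of height $\leqslant 6$ and thereafter remove only $\theta=0$ elements, so each such column still contains both $(e_{12},0)$ and $(e_{12},e_{12})$ and hence contributes $\theta$-sum $2$, while the full-height columns contribute $+1$ each. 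The total $\theta$ is therefore at least $\lambda_7+2(\lambda_6-\lambda_7)=2\lambda_6-\lambda_7$, which for generic $\lambda$ exceeds $1$; consequently \emph{every} term of $e^*_{T_\lambda}f$ evaluates to zero on your substitution. Your ``spacer'' intuition cannot rescue this: absorbing positive $\theta$ in row~$1$ would require $(e_{21},0)$ in the short columns, precisely the element you have excluded. Even with a corrected filling, the concluding ``orbit--stabiliser argument'' is where all the difficulty lies, and it is only hoped for, not carried out.

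The paper's approach is structurally different. It works with $e_{T_\lambda}=a_{T_\lambda}b_{T_\lambda}$ and takes $f$ not as a single monomial but as a product $(f_1f_3)^{\beta_3}(f_1f_5)^{\beta_5}\cdots f_{12}^{\beta_{12}}$ of explicit \emph{column-alternating} polynomials $f_1,\dots,f_{12}$, one polynomial per column, grouped into twelve blocks whose widths $\beta_i$ are chosen (this is where $\lambda_6+\lambda_7\leqslant\lambda_1$ enters) so that $\beta_3+\beta_5+\beta_7+\beta_9+\beta_{11}=\lambda_7$: the corresponding ``odd'' blocks \emph{retain} $(e_{21},0)$ and exactly compensate the $\theta=+1$ surplus of the $\lambda_7$ full columns (see the tableau $\tau$ in Figure~\ref{FigureTauT1}). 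Because each $f_i$ is already alternating in its column, $b_{T_\lambda}f$ is just a nonzero scalar multiple of $f$, so only the row symmetrisation $a_{T_\lambda}$ must be controlled. This is done by arranging that every row of $\tau$ carries at most two distinct values, one from $A^{(0)}$ and one from $A^{(1)}$; the $h_0$/$h_1$ labels written into the $f_i$ then kill any row permutation that mixes the two groups, so all surviving terms coincide up to a positive scalar and no cancellation can occur.
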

\begin{proof}
It is sufficient to show that for some $f\in P_n$ and some $T_\lambda$
we have $e_{T_\lambda}f \not\equiv 0$ on $A$.

Note that each $e_{T_\lambda}f$ is alternating in $\lambda_7$ disjoint sets of variables, each of $7$ variables. By the proof Lemma~\ref{LemmaTExampleIneqLambda}, each column will have the impact at least $1$ to the sum of the values of $\theta$ on the elements substituted for the variables of $e_{T_\lambda}f$.
Therefore, we need a compensation.

Let $\beta_2=\lambda_6-\lambda_7$. Fix numbers $\beta_3,\ldots,\beta_{12} \geqslant 0$
such that $\beta_3+\beta_5+\beta_7+\beta_9+\beta_{11} = \lambda_7$,\quad
$\beta_3+\beta_4=\lambda_5-\lambda_6$,\quad $\beta_5+\beta_6=\lambda_4-\lambda_5$,
$\beta_7+\beta_8=\lambda_3-\lambda_4$,\quad $\beta_9+\beta_{10}=\lambda_2-\lambda_3$,
and $\beta_{11}+\beta_{12}=\lambda_1-\lambda_2$. In other words, we have
$$D_\lambda=\begin{array}{|c|c|c|c|c|c|c|c|c|c|c|c|}
\multicolumn{1}{c}{\lambda_7} & \multicolumn{1}{c}{\beta_2} & \multicolumn{1}{c}{\beta_3} & \multicolumn{1}{c}{\beta_4} & \multicolumn{1}{c}{\beta_5} & \multicolumn{1}{c}{\beta_6} & \multicolumn{1}{c}{\beta_7} & \multicolumn{1}{c}{\beta_8} & \multicolumn{1}{c}{\beta_9} &
\multicolumn{1}{c}{\beta_{10}} & \multicolumn{1}{c}{\beta_{11}} & \multicolumn{1}{c}{\beta_{12}} \\
\hline
 \ldots & \ldots & \ldots & \ldots & \ldots & \ldots & \ldots & \ldots & \ldots & \ldots & \ldots & \ldots \\
 \cline{1-12}
 \ldots & \ldots & \ldots & \ldots & \ldots & \ldots & \ldots & \ldots & \ldots & \ldots \\
 \cline{1-10}
 \ldots & \ldots & \ldots & \ldots & \ldots & \ldots & \ldots & \ldots \\
 \cline{1-8}
 \ldots & \ldots & \ldots & \ldots & \ldots & \ldots \\
 \cline{1-6}
 \ldots & \ldots & \ldots & \ldots \\
 \cline{1-4}
 \ldots & \ldots \\
 \cline{1-2}
 \ldots  \\
 \cline{1-1}
\end{array}.$$
(Here $\beta_i$ denotes the number of columns in each block of columns.)

Each of the first $\lambda_7$ columns will give an impact $1$ to $\theta$,
which will be compensated by the columns that are marked by $\beta_3$, $\beta_5$, $\beta_7$, $\beta_9$, and $\beta_{11}$. The columns that are marked by $\beta_2$, $\beta_3$, $\beta_4$, $\beta_6$, $\beta_8$, $\beta_{10}$, and $\beta_{12}$ will give zero impact to $\theta$.

We fix some Young tableau $T_\lambda$ of the shape $\lambda$ filled in with the numbers
from $1$ to $n$.
For each column of $T_\lambda$ we define a multilinear alternating polynomial depending on the variables with the indexes from the column. For shortness, we denote the polynomials
corresponding to different columns in the $i$th block by the same letter $f_i$.
By $(i_1, \ldots, i_\ell)$ we denote the $\ell$-tuple of numbers from a column (from up to down).
By $S\lbrace i_1, \ldots, i_\ell\rbrace$ we denote the symmetric group on $i_1, \ldots, i_\ell$.
We define $$f_1 := \sum_{\sigma\in S\lbrace i_1, \ldots, i_7\rbrace} (\sign \sigma)
x^{h_0}_{\sigma(i_3)}
x^{h_1}_{\sigma(i_2)}
x^{h_0}_{\sigma(i_6)}
x^{h_1}_{\sigma(i_4)}
x^{h_0}_{\sigma(i_5)}
x^{h_0}_{\sigma(i_1)}
x^{h_1}_{\sigma(i_7)},
$$ 
$$f_2 := \sum_{\sigma\in S\lbrace i_1, \ldots, i_6\rbrace} (\sign \sigma)
x^{h_0}_{\sigma(i_3)}
x^{h_1}_{\sigma(i_2)}
x^{h_0}_{\sigma(i_6)}
x^{h_1}_{\sigma(i_4)}
x^{h_0}_{\sigma(i_5)}
x^{h_0}_{\sigma(i_1)}
,$$ 
$$f_3 := \sum_{\sigma\in S\lbrace i_1, \ldots, i_5\rbrace} (\sign \sigma)
x^{h_0}_{\sigma(i_5)}
x^{h_1}_{\sigma(i_4)}
x^{h_0}_{\sigma(i_1)}
x^{h_1}_{\sigma(i_2)}
x^{h_0}_{\sigma(i_3)}
,$$
$$f_4 := \sum_{\sigma\in S\lbrace i_1, \ldots, i_5\rbrace} (\sign \sigma)
x^{h_1}_{\sigma(i_2)}
x^{h_0}_{\sigma(i_3)}
x^{h_1}_{\sigma(i_5)}
x^{h_1}_{\sigma(i_4)}
x^{h_0}_{\sigma(i_1)}
,$$
$$f_5 := \sum_{\sigma\in S\lbrace i_1, \ldots, i_4\rbrace} (\sign \sigma)
x^{h_1}_{\sigma(i_4)}
x^{h_0}_{\sigma(i_1)}
x^{h_1}_{\sigma(i_2)}
x^{h_0}_{\sigma(i_3)}
,$$ $$f_6 := \sum_{\sigma\in S\lbrace i_1, \ldots, i_4\rbrace} (\sign \sigma)
x^{h_1}_{\sigma(i_2)}
x^{h_1}_{\sigma(i_3)}
x^{h_1}_{\sigma(i_4)}
x^{h_0}_{\sigma(i_1)}
,$$ $$f_7 := \sum_{\sigma\in S\lbrace i_1, i_2, i_3\rbrace} (\sign \sigma)
x^{h_0}_{\sigma(i_1)}
x^{h_1}_{\sigma(i_2)}
x^{h_0}_{\sigma(i_3)}
,\qquad f_8 := \sum_{\sigma\in S\lbrace i_1, i_2, i_3\rbrace} (\sign \sigma)
x^{h_1}_{\sigma(i_2)}
x^{h_1}_{\sigma(i_3)}
x^{h_0}_{\sigma(i_1)}
,$$ $$f_9 := \sum_{\sigma\in S\lbrace i_1, i_2\rbrace} (\sign \sigma)
x^{h_0}_{\sigma(i_1)}
x^{h_1}_{\sigma(i_2)}
,\qquad f_{10} := \sum_{\sigma\in S\lbrace i_1, i_2\rbrace} (\sign \sigma)
x^{h_0}_{\sigma(i_2)}
x^{h_0}_{\sigma(i_1)}
,$$ $$f_{11} := x^{h_0}_{i_1},\qquad f_{12} := x^{h_1}_{i_1}.$$

Define the polynomial
$$f=(f_1 f_3)^{\beta_3}(f_1 f_5)^{\beta_5}(f_1 f_7)^{\beta_7}(f_1 f_9)^{\beta_9}(f_1 f_{11})^{\beta_{11}}
f_2^{\beta_2} f_4^{\beta_4}f_6^{\beta_6}f_8^{\beta_8}f_{10}^{\beta_{10}}f_{12}^{\beta_{12}} \in P_n.$$
 As we have already mentioned, here different copies of $f_i$ depend on different variables.

The copies of $f_1$ are alternating polynomials
of degree $7$ corresponding to the first $\lambda_7$ columns of height $7$.

The copies of $f_2$ are alternating polynomials of degree $6$ corresponding to
the next $\beta_2$ columns of height $6$.

\ldots

The copies of $f_{12}$ are polynomials of degree $1$ with the indexes from the last
$\beta_{12}$ columns of height $1$.

We claim that $e_{T_\lambda}f \not\equiv 0$. In order to verify this, we fill $D_\lambda$ with specific homogeneous elements and denote the tableau obtained
by $\tau$. (See Figure~\ref{FigureTauT1}.)
\begin{landscape}

\begin{figure}\caption{Substitution for the variables of $e_{T_\lambda}f$, grading semigroup $T_1$}\label{FigureTauT1}
$$\tau=\begin{array}{|c|c|c|c|c|c|c|c|c|c|c|c|}
\multicolumn{1}{c}{\lambda_7} & \multicolumn{1}{c}{\beta_2} & \multicolumn{1}{c}{\beta_3} & \multicolumn{1}{c}{\beta_4} & \multicolumn{1}{c}{\beta_5} & \multicolumn{1}{c}{\beta_6} & \multicolumn{1}{c}{\beta_7} & \multicolumn{1}{c}{\beta_8} & \multicolumn{1}{c}{\beta_9} &
\multicolumn{1}{c}{\beta_{10}} & \multicolumn{1}{c}{\beta_{11}} & \multicolumn{1}{c}{\beta_{12}} \\
\hline
 (e_{21},0) & (e_{21},0) & (e_{21},0) & (e_{21},0) & (e_{21},0) & (e_{21},0) & (e_{21},0) & (e_{21},0) & (e_{21},0) & (e_{21},0) & (e_{21},0) & (e_{11}, e_{11}) \\
 \cline{1-12}
 (e_{11}, e_{11}) & (e_{11}, e_{11}) & (e_{11}, e_{11}) & (e_{11}, e_{11}) & (e_{11}, e_{11}) & (e_{11}, e_{11}) & (e_{11}, e_{11}) & (e_{11}, e_{11}) & (e_{11}, e_{11}) & (e_{12}, 0) \\
 \cline{1-10}
 (e_{11}, 0) & (e_{11}, 0) & (e_{11}, 0) & (e_{11}, 0) & (e_{11}, 0) & (e_{12}, e_{12}) & (e_{11}, 0) & (e_{12}, e_{12}) \\
 \cline{1-8}
 (e_{22}, e_{22}) & (e_{22}, e_{22}) & (e_{22}, e_{22}) & (e_{22}, e_{22}) & (e_{22}, e_{22}) & (e_{22}, e_{22}) \\
 \cline{1-6}
 (e_{22}, 0) & (e_{22}, 0) & (e_{22}, 0) & (e_{12}, e_{12}) \\
 \cline{1-4}
 (e_{12}, 0) & (e_{12}, 0) \\
 \cline{1-2}
 (e_{12}, e_{12})  \\
 \cline{1-1}
\end{array}$$ 
\end{figure}

 (Here in the $i$th block we have $\beta_i$ columns with the same values
in all cells of a row. For shortness, we depict each value for each block only once.
The tableau $\tau$ is still of the shape $\lambda$.)

\end{landscape}

Now for each variable we substitute the element from the corresponding box in $\tau$.
Note that $f$ does not vanish under this substitution.

Recall that $e_{T_\lambda} = a_{T_\lambda}b_{T_\lambda}$
where $a_{T_\lambda}$ is the symmetrization in the variables of each row
and $b_{T_\lambda}$ is the alternation in the variables of each column.
Since all $f_i$ are alternating polynomials, $b_{T_\lambda} f$
is a nonzero multiple of $f$.

Two sets of variables correspond to the second row of $T_\lambda$.
For the variables of the first group we substitute $(e_{11},e_{11})  \in A^{(1)}$,
for the second one, we substitute $(e_{12},0) \in A^{(0)}$.
Thus if an item in $a_{T_\lambda}$ mixes variables from these two groups,
at least one variable from the second group, i.e. in $f_{11}$, is replaced with a variable form the first one. However, $f_{10}$ vanishes if at least one variable of it is replaced with an element of $A^{(1)}$
since $h_0$ is applied for both variables of $f_{10}$.
Thus all items in $a_{T_\lambda}b_{T_\lambda}f$ where variables from these two groups are mixed, vanish.

Therefore, if an item in $a_{T_\lambda}$ replaces a variable from the first two columns with
a variable with a different value from the tableau $\tau$, we will have too many elements from
 $A^{(1)}$ substituted for the variables of $f_1$ and $f_2$ and the result is zero in virtue of the action of $h_0$.
 Therefore all items in $a_{T_\lambda}b_{T_\lambda}f$ where variables from the first two columns having different values are mixed, vanish. We continue this procedure and finally show that if an item in $a_{T_\lambda}$ does not
 stabilize the sets of variables with the same values from the tableau $\tau$, the corresponding
 item in $a_{T_\lambda}b_{T_\lambda}f$ vanishes.
 Hence the value of $a_{T_\lambda}b_{T_\lambda}f$ is a nonzero multiple of the value of $b_{T_\lambda}f$, i.e. is nonzero. The lemma is proved.
\end{proof}

\begin{proof}[Proof of Theorem~\ref{TheoremT1GradFractPI}.]
We use Lemmas~\ref{LemmaTExampleLower} and~\ref{LemmaAltT1}.
\end{proof}

\section{A $T_2$-graded algebra with a non-integer graded PI-exponent}

\begin{theorem}\label{TheoremT2GradFractPI}
Let $A_2 = M_2(F)\oplus F j_{11} \oplus F j_{12} \oplus F j_{22}$ (direct sum of ideals) where $F$ is a field of characteristic $0$ and
$j_{11}^2=j_{12}^2=j_{22}^2=0$. Define a $T_2$-grading on $A_2$ by $A_2^{(0)}=(M_2(F),0)$, $A_2^{(v)}=\langle (e_{11}, j_{11}), (e_{12}, j_{12}), (e_{22}, j_{22})\rangle$. Then there exists  $\lim\limits_{n\to \infty} \sqrt[n]{c_n^{T_2\text{-}\mathrm{gr}}(A_2)} =
4+2\sqrt 2= 6.8284\ldots$.
\end{theorem}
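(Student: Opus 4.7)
The plan is to mirror the proof of Theorem~\ref{TheoremT1GradFractPI} by reducing to Lemma~\ref{LemmaTExampleLower}: I want to show that $A_2$ fits the framework of Lemma~\ref{LemmaTExampleIneqLambda} and then establish the analog of Lemma~\ref{LemmaAltT1} in the $T_2$-graded setting. Since $\dim A_2 = 7 = q$, Lemma~\ref{LemmaMaxTExample} predicts the target exponent $(q-3) + 2\sqrt 2 = 4 + 2\sqrt 2$, so the numerics match.

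First I would verify the hypotheses of Lemma~\ref{LemmaTExampleIneqLambda}. Writing $W := F j_{11} \oplus F j_{12} \oplus F j_{22}$ with $\mathcal B_0 = \lbrace e_{11}, e_{12}, e_{22}\rbrace$ and $\varphi(j_{ij}) := e_{ij}$, the $T_2$-grading on $A_2$ is exactly the one considered in that lemma, with $t_1 = 0$ and $t_2 = v$. Because $M_2(F)$ and each $F j_{ij}$ are pairwise ideal summands of $A_2$, both $M_2(F)\cdot W$ and $W\cdot M_2(F)$ vanish; and since $j_{ij}^2 = 0$ while any cross-product $j_{ij}\cdot j_{k\ell}$ lies in the intersection of distinct one-dimensional ideals, $W^2 = 0$. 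This places us in case~(2) of the lemma, so $m(A_2, (FT_2)^*, \lambda) \neq 0$ forces $\lambda_8 = 0$ and $\lambda_6 + \lambda_7 \leq \lambda_1 + 1$.

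Next I would prove the $T_2$-analog of Lemma~\ref{LemmaAltT1}: for every $\lambda \vdash n$ with $\lambda_8 = 0$ and $\lambda_6 + \lambda_7 \leq \lambda_1$, $m(A_2, (FT_2)^*, \lambda) \neq 0$. I would reuse verbatim the polynomial $f$ and the tableau $\tau$ from the $T_1$ proof, applying only two purely notational substitutions: every $h_1$ becomes $h_v$, and every entry $(e_{ij}, e_{ij})$ in Figure~\ref{FigureTauT1} becomes $(e_{ij}, j_{ij})$. Two observations justify that the argument transfers. First, when one computes the identity term of $e_{T_\lambda} f$ at $\tau$, every column of $\tau$ already contains some row substituted by $(e_{k\ell}, 0)$, so the $W$-part of that column's product vanishes automatically from the zero factor; the $M_2(F)$-part is unaffected by swapping $(e_{ij}, e_{ij})$ for $(e_{ij}, j_{ij})$, so the identity term is the same nonzero element of $A_2^{(0)}$ as in the $T_1$ computation. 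Second, the vanishing of those items in $a_{T_\lambda}$ that fail to stabilize the row classes of $\tau$ depends only on $h_0|_{A_2^{(v)}} = 0$ and $h_v|_{A_2^{(0)}} = 0$, which hold verbatim in the $T_2$ case.

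Assuming the analog of Lemma~\ref{LemmaAltT1}, Lemma~\ref{LemmaTExampleLower} applied with $q = 7$ immediately yields $\lim_{n\to\infty}\sqrt[n]{c_n^{T_2\text{-}\mathrm{gr}}(A_2)} = 4 + 2\sqrt 2$. The delicate point is the third step: in $T_2$ the product $A_2^{(v)}\cdot A_2^{(v)}$ lands in $A_2^{(0)}$, whereas in $T_1$ the corresponding product $A^{(1)}\cdot A^{(1)}$ stays in $A^{(1)}$, so a priori some intermediate calculation in Lemma~\ref{LemmaAltT1} could secretly have relied on the $T_1$ multiplication. The specific substitution bypasses that concern because the $W$-part of every column product is already killed by the zero appearing in some row of that column, which hides the distinction between $W \cong \UT_2(F)$ with $W^2 \neq 0$ and the present $W$ with $W^2 = 0$.
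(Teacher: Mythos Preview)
Your argument is essentially correct, but the paper takes a cleaner and more global route. Rather than transplanting the explicit polynomial and substitution of Lemma~\ref{LemmaAltT1} into the $T_2$ setting, the paper sets up the linear isomorphism $\psi\colon A\to A_2$ (where $A$ is the $T_1$-graded algebra of Theorem~\ref{TheoremT1GradFractPI}) and the obvious algebra isomorphism $\Theta\colon F\langle X^{T_1\text{-}\mathrm{gr}}\rangle\to F\langle X^{T_2\text{-}\mathrm{gr}}\rangle$, and shows directly that $\Theta(\Id^{T_1\text{-}\mathrm{gr}}(A))=\Id^{T_2\text{-}\mathrm{gr}}(A_2)$. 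The key observation is exactly the one you use locally: for a homogeneous element of either algebra, vanishing is equivalent to vanishing of the $M_2(F)$-component, and $\pi\psi=\pi$; since $T_1$ and $T_2$ are commutative, the value of any multilinear graded polynomial at homogeneous inputs is again homogeneous. This yields $c_n^{T_1\text{-}\mathrm{gr}}(A)=c_n^{T_2\text{-}\mathrm{gr}}(A_2)$ for all $n$, after which one just quotes Theorem~\ref{TheoremT1GradFractPI}. Your route reproves only what is needed for Lemma~\ref{LemmaTExampleLower}; the paper's gives the stronger equality of codimensions with no extra work.

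One small slip: your claim that ``every column of $\tau$ already contains some row substituted by $(e_{k\ell},0)$'' is false for the $\beta_{12}$-columns, whose single entry is $(e_{11},e_{11})$; so the identity term need not literally coincide with the $T_1$ value nor lie in $A_2^{(0)}$ (e.g.\ for $\lambda=(1)$ one gets $(e_{11},j_{11})\in A_2^{(v)}$). What you actually need, and what your own remark that ``the $M_2(F)$-part is unaffected'' already supplies, is that the first component agrees with the $T_1$ computation and is therefore nonzero; since the value is homogeneous (both semigroups being commutative), this suffices. That is precisely the mechanism the paper packages once and for all at the level of the identity ideals.
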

\begin{proof}
Let $A$ be the algebra from Theorem~\ref{TheoremT1GradFractPI}. Define a linear isomorphism
$\psi \colon A \mathrel{\widetilde{\rightarrow}} A_2$ by $\psi(e_{ij}, e_{k\ell})=(e_{ij}, j_{k\ell})$
and $\psi(e_{ij}, 0)=(e_{ij}, 0)$.
Then $\psi(A^{(0)})=A_2^{(0)}$ and $\psi(A^{(1)})=A_2^{(v)}$.
 Define an isomorphism $\Theta \colon F\langle X^{T_1\text{-}\mathrm{gr}} \rangle
\to F\langle X^{T_2\text{-}\mathrm{gr}} \rangle$ of algebras
by $\Theta(x^{(0)}_i)=x^{(0)}_i$ and $\Theta(x^{(1)}_i)=x^{(v)}_i$.
We claim that \begin{equation}\label{EqIdT1IdT2}\Theta(\Id^{T_1\text{-}\mathrm{gr}}(A))=\Id^{T_2\text{-}\mathrm{gr}}(A_2).\end{equation}
Since $F$ is of characteristic $0$, both $\Id^{T_1\text{-}\mathrm{gr}}(A)$
and $\Id^{T_2\text{-}\mathrm{gr}}(A_2)$ are generated by multilinear polynomials.
(The proof is completely analogous to~\cite[Theorem~1.3.8]{ZaiGia}.)
In other words, in order to prove~(\ref{EqIdT1IdT2}), it is sufficient to show that if $f\in F\langle X^{T_1\text{-}\mathrm{gr}} \rangle$ is multilinear as an ordinary polynomial in variables
$x_1^{(t_1)}, x_2^{(t_2)},\ldots, x_n^{(t_n)}$ where $t_i\in T_1$, then
$f \in \Id^{T_1\text{-}\mathrm{gr}}(A)$ if and only if $\Theta(f)\in
\Id^{T_2\text{-}\mathrm{gr}}(A_2)$.
We substitute only homogeneous elements. Note that
$\pi \psi(a) = \pi(a)$ where $\pi$ is the projection on the first component: $\pi(a,b)=a$ for all $(a,b)\in A$ and $(a,b)\in A_2$. Moreover, if $a \in A^{(0)} \cup A^{(1)}$, then $\pi(a)=0$ if and only if $a=0$. Since $T_1$ and $T_2$ are commutative, the value of $f$ 
under the substitution of homogeneous
elements is again a homogeneous element. Applying $\pi$, we show that $f\in P_n^{T_1\text{-}\mathrm{gr}}$
vanishes under the substitution of homogeneous
elements $a^{(t_i)}_i\in A^{(t_i)}$, $t_i\in T_1$, if and only if $\Theta(f)$ vanishes under the substitution of $\psi\left(a^{(\tilde t_i)}_i\right)\in A_2^{(\tilde t_i)}$. (Here $\tilde 0 = 0$
and $\tilde 1 = v$.) Hence~(\ref{EqIdT1IdT2}) holds and
$$c_n^{T_1\text{-}\mathrm{gr}}(A)=c_n^{T_2\text{-}\mathrm{gr}}(A_2)\text{ for all }n\in\mathbb N.$$
Now we apply Theorem~\ref{TheoremT1GradFractPI}.
\end{proof}

\section{A $T_3$-graded algebra with a non-integer graded PI-exponent}

\begin{theorem}\label{TheoremT3GradFractPI}
Let $F$ be a field of characteristic $0$.
Denote by $I$ the irreducible left $M_2(F)$-module isomorphic to the minimal left ideal $\langle e_{12}, e_{22}\rangle_F \subset M_2(F)$.
Let $A = M_2(F)\oplus I$ (direct sum of left ideals) where $IM_2(F) := 0$ and $I^2:=0$.
 Define a $T_3$-grading on $A$ by $A^{(e_1)}=(M_2(F),0)$, $A^{(e_2)}=\lbrace (\varphi(a), a) \mid a \in I \rbrace$ where $\varphi \colon I \hookrightarrow M_2(F)$ is the natural embedding which is a homomorphism of $ M_2(F)$-modules. Then there exists  $\lim\limits_{n\to \infty} \sqrt[n]{c_n^{T_3\text{-}\mathrm{gr}}(A)} =3+2\sqrt 2= 5.8284\ldots$
\end{theorem}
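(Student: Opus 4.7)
The plan is to apply Lemma~\ref{LemmaTExampleLower} with $q=\dim A=6$. First, $A$ satisfies condition (3) of Lemma~\ref{LemmaTExampleIneqLambda}: $W=I$, $\mathcal B_0=\{e_{12},e_{22}\}$ (in particular $e_{12}\in\mathcal B_0$), $IM_2(F)=0$, $I^2=0$, and $\varphi$ is a homomorphism of left $M_2(F)$-modules. Combined with Lemmas~\ref{LemmaTExampleUpperFd} and~\ref{LemmaMaxTExample}, this already yields the upper bound $\mathop{\overline\lim}_{n\to\infty}\sqrt[n]{c_n^{T_3\text{-}\mathrm{gr}}(A)}\leqslant(q-3)+2\sqrt2=3+2\sqrt 2$.

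For the matching lower bound, Lemma~\ref{LemmaTExampleLower} reduces the task to the following $T_3$-analog of Lemma~\ref{LemmaAltT1}, which is the main technical step: for every $\lambda\vdash n$ with $\lambda_5+\lambda_6\leqslant\lambda_1$, the multiplicity $m(A,(FT_3)^*,\lambda)$ is nonzero. We work with the homogeneous basis $\mathcal B=\{E_{11},E_{12},E_{21},E_{22},J_{12},J_{22}\}$ of $A$, where $E_{ij}:=(e_{ij},0)\in A^{(e_1)}$ and $J_{ij}:=(e_{ij},\varphi^{-1}(e_{ij}))\in A^{(e_2)}$. The crucial point is that the six elements of $\mathcal B$ admit an ordering whose product is nonzero; a direct computation in $A$ gives
\[
E_{11}\cdot E_{12}\cdot J_{22}\cdot E_{22}\cdot E_{21}\cdot J_{12}=J_{12}\ne 0.
\]
This makes columns of the maximal height $6$ realizable in the Young diagram $D_\lambda$, and analogous nonvanishing monomials obtained by omitting appropriate basis elements handle columns of heights up to $5$.

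Following the pattern of Lemma~\ref{LemmaAltT1}, we partition the columns of $D_\lambda$ into blocks by height and subdivide them so as to maintain the $\theta$-balance (recall from Lemma~\ref{LemmaTExampleIneqLambda} that $-1\leqslant\sum\theta(a_i)\leqslant 1$ for any nonzero product of elements of $\mathcal B$), choose parameters $\beta_i\geqslant 0$, define for each block a multilinear alternating $(FT_3)^*$-polynomial $f_i$ whose monomials realize the chosen nonvanishing orderings, and let $f$ be the product of the $f_i$'s in a suitable order. We then fill $D_\lambda$ with the corresponding basis elements to obtain a tableau $\tau$. The column-alternator $b_{T_\lambda}$ multiplies $f(\tau)$ by $|C_{T_\lambda}|$; the row-symmetrizer $a_{T_\lambda}$ cannot contribute nonzero mixing terms, since any permutation exchanging variables with distinct substituted values produces either a monomial killed by one of the acting functionals $h_{e_1}$ or $h_{e_2}$ (grading mismatch) or a monomial containing an element of $I$ immediately to the left of an element of $M_2(F)$ (killed by $IM_2(F)=0$). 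Hence $e_{T_\lambda}f\not\equiv 0$ on $A$.

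The main obstacle is the combinatorial bookkeeping. Because $\mathcal B$ has no analog of the basis element $(e_{11},e_{11})$ used in the $T_1$-case, and because we work with a direct sum of left ideals rather than two-sided ideals (so the module action, and not just multiplication in an image of $\varphi$, enters the arithmetic of $\mathcal B$), the blocks $\beta_i$ and the monomials inside the $f_i$'s have to be redesigned; once these are in place, the row-mixing verification proceeds essentially as in the end of the proof of Lemma~\ref{LemmaAltT1}.
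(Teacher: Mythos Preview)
Your proposal is correct and follows exactly the paper's approach: the paper's proof of Theorem~\ref{TheoremT3GradFractPI} is the single line ``We use Lemmas~\ref{LemmaTExampleLower} and~\ref{LemmaAltT3}'', and your sketch of the key technical step is precisely the content of Lemma~\ref{LemmaAltT3}, argued in the same way (block decomposition of $D_\lambda$ by column heights with parameters $\beta_i$, alternating $(FT_3)^*$-polynomials $f_i$ attached to the blocks, a tableau $\tau$ of homogeneous basis elements, and the row-mixing argument via the functionals $h_{e_1},h_{e_2}$). One small correction: the clause about ``an element of $I$ immediately to the left of an element of $M_2(F)$'' is superfluous and not quite right, since the substituted homogeneous basis elements $E_{ij},J_{ij}$ never lie in $(0,I)$ and $J_{ij}E_{k\ell}=\delta_{jk}E_{i\ell}$ does not vanish in general; in the paper's tableau $\tau$ each row carries at most two distinct values and these always have different gradings, so the functional (grading-mismatch) argument alone handles the row symmetrizer.
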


Theorem~\ref{TheoremT3GradFractPI} is proved at the end of the section.

\begin{remark}
Similarly, $\lim\limits_{n\to \infty} \sqrt[n]{c_n^{T_3^{\,\mathrm{op}}\text{-}\mathrm{gr}}(A^{\,\mathrm{op}})} =3+2\sqrt 2= 5.8284\ldots$
\end{remark}
\begin{remark}
Note that $A$ does not contain unity. If a $T_3$-graded algebra is unital, its graded PI-exponent
is integer. (See Theorem~\ref{TheoremTIdemAmitsur} below.)
\end{remark}
\begin{remark}
The algebra $A$ is $T_3$-graded-simple (see Proposition~\ref{PropositionAT3GrSimple} below), however $3+2\sqrt 2=\PIexp^{T_3\text{-}\mathrm{gr}}(A)<\dim A=6$
even if $F$ is algebraically closed.
\end{remark}

\begin{proposition}\label{PropositionAT3GrSimple}
The algebra $A$ is $T_3$-graded-simple.
\end{proposition}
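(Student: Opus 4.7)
The plan is to show directly that $A$ has no graded ideals other than $0$ and $A$, and that $A^2 \neq 0$. The grading has only two components, so a graded ideal $J$ satisfies $J = (J \cap A^{(e_1)}) \oplus (J \cap A^{(e_2)})$, and I will handle the two cases according to whether $J \cap A^{(e_1)}$ vanishes or not. The key structural facts I will exploit are: $A^{(e_1)} \cong M_2(F)$ is a simple algebra containing $1_{M_2(F)}$; $A^{(e_2)}$, via the projection onto the second coordinate, is isomorphic as a left $A^{(e_1)}$-module to the irreducible module $I$; and $I M_2(F) = 0 = I^2$ by construction. Non-nilpotence is immediate since $A^{(e_1)} \cdot A^{(e_1)} = A^{(e_1)} \neq 0$.

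In the first case, assume $J \cap A^{(e_1)} \neq 0$. Under the identification $A^{(e_1)} \cong M_2(F)$, the subspace $J \cap A^{(e_1)}$ is closed under left and right multiplication by $A^{(e_1)}$, hence is a two-sided ideal of $M_2(F)$, so equals all of $A^{(e_1)}$. In particular $(1_{M_2(F)},0) \in J$, and then for any $(\varphi(a),a) \in A^{(e_2)}$ we have $(1_{M_2(F)},0)(\varphi(a),a) = (\varphi(a),a)$ since $\varphi$ is $M_2(F)$-linear; this gives $A^{(e_2)} \subseteq J$ and hence $J = A$.

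In the second case, assume $J \cap A^{(e_1)} = 0$, so $J \subseteq A^{(e_2)}$ and $J$ contains some $(\varphi(a),a)$ with $0 \neq a \in I$. Since $I$ is irreducible as a left $M_2(F)$-module, the orbit $M_2(F) \cdot a$ equals $I$, so
\[
A^{(e_1)} \cdot (\varphi(a),a) = \{(\varphi(m \cdot a), m \cdot a) \mid m \in M_2(F)\} = A^{(e_2)},
\]
and $A^{(e_2)} \subseteq J$. Now I derive a contradiction by producing an element of $J \cap A^{(e_1)}$: since $(\varphi(e_{12}), e_{12}) \in A^{(e_2)} \subseteq J$ and $(e_{22},0) \in A^{(e_1)} \subseteq A$, the product
\[
(\varphi(e_{12}), e_{12})\,(e_{22},0) = (e_{12} \cdot e_{22},\, e_{12} \cdot e_{22}) = (e_{12}, 0)
\]
(using $e_{12} e_{22} = e_{12}$ in $M_2(F)$ and $I \cdot M_2(F) = 0$ for the second coordinate) lies in $J \cap A^{(e_1)}$ and is nonzero, contradicting the hypothesis of this case.

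The only delicate point is the second case, where one must verify that $(\varphi(e_{12}), e_{12}) \in J$ together with right multiplication by a suitable element of $A^{(e_1)}$ genuinely produces a nonzero component in $A^{(e_1)}$; this requires choosing $m \in M_2(F)$ so that $\varphi(a)m$ is nonzero, which is possible precisely because $\varphi(I) = \langle e_{12}, e_{22}\rangle_F$ is not annihilated from the right by all of $M_2(F)$. The asymmetry between the two coordinates (first coordinate a subalgebra, second coordinate nilpotent of square zero and right-annihilated by $M_2(F)$) is what drives the argument and makes the analogous left-multiplication trick unavailable — but the right-multiplication trick above is sufficient.
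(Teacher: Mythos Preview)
Your proof is correct and uses the same core idea as the paper (right-multiply a homogeneous element by something in $A^{(e_1)}$ to kill the $I$-coordinate and land in $(M_2(F),0)$), but you organize it via an unnecessary case split. The paper observes in one line that \emph{every} nonzero homogeneous element $(a_1,b_1)$ has $a_1\ne 0$ (since $\varphi$ is injective), and then right-multiplies by the identity $(E,0)$ to obtain $(a_1,0)\in W\cap A^{(e_1)}$ immediately; this dispenses with your Case~2 detour through the irreducibility of $I$ and the specific computation with $e_{12}$ and $e_{22}$. Your argument is perfectly valid, just slightly longer: once you notice that the first coordinate of any nonzero element of $A^{(e_2)}$ is already nonzero, Case~2 collapses into Case~1 without invoking irreducibility at all.
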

\begin{proof} First, we notice that $J(A)=I$.
Suppose $W\ne 0$ is a graded ideal of $A$.
Then there exists a nonzero homogeneous element $(a_1, b_1) \in W$ where $a_1 \in M_2(F)$ and $b_1\in I$.  Since $I$ does not contain homogeneous elements, we have $a_1 \ne 0$.
However $(a_1,b_1)(E,0) = (a_1,0) \in (M_2(F),0) \cap W$.
Since $W$ is a two sided ideal and $M_2(F)$ is simple, we get $(M_2(F),0) \subseteq W$.
Furthermore $(0,I) = (M_k(F),0) (0,I) \subseteq W$. Thus $W = A$, and $A$ is $T_3$-graded-simple.
\end{proof}

To prove Theorem~\ref{TheoremT3GradFractPI}, we need Lemma~\ref{LemmaAltT3} which is an analog of Lemma~\ref{LemmaAltT1}.
We omit $\varphi$
for shortness and write $(e_{ij}, e_{ij})$ instead of $(e_{ij}, \varphi^{-1}(e_{ij}))$. 

\begin{lemma}\label{LemmaAltT3}
Let $\lambda \vdash n$, $n\in\mathbb N$, $\lambda_7 = 0$, and $\lambda_5+\lambda_6 \leqslant \lambda_1$.
Then $m(A,(FT_3)^*,\lambda) \ne 0$.
\end{lemma}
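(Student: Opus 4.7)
The plan is to mirror the proof of Lemma~\ref{LemmaAltT1}, adapted to $q = \dim A = 6$ and to the one-sided annihilation relations $IM_2(F)=I^2=0$. I will exhibit an explicit multilinear $f\in P_n^{(FT_3)^*}$ and a Young tableau $T_\lambda$ of shape $\lambda$ such that $e_{T_\lambda}f$ does not vanish on $A$.

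First I would partition the columns of $D_\lambda$ into ten blocks indexed by height and by a ``pairing'' role. Set $\beta_2 = \lambda_5 - \lambda_6$ and choose non-negative integers $\beta_3,\ldots,\beta_{10}$ satisfying $\beta_3+\beta_5+\beta_7+\beta_9 = \lambda_6$, $\beta_3+\beta_4 = \lambda_4-\lambda_5$, $\beta_5+\beta_6 = \lambda_3-\lambda_4$, $\beta_7+\beta_8 = \lambda_2-\lambda_3$, and $\beta_9+\beta_{10} = \lambda_1-\lambda_2$; the hypothesis $\lambda_5 + \lambda_6 \leqslant \lambda_1$ rewrites as $\lambda_6 \leqslant \lambda_1 - \lambda_5$, which is precisely what makes this system solvable over $\mathbb Z_+$. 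The motivation is that each height-$6$ column, filled with all six basis elements of $\mathcal B = \lbrace (e_{11},0), (e_{12},0), (e_{21},0), (e_{22},0), (e_{12}, e_{12}), (e_{22}, e_{22}) \rbrace$, contributes exactly $\theta = +1$ to the total (since $\sum_{b\in \mathcal B}\theta(b) = 1$), so each of the $\lambda_6$ tall columns needs to be paired with a shorter column carrying $(e_{21},0)$, the unique basis element with $\theta = -1$, to keep the running total within the band $[-1,1]$ allowed by~(\ref{EqTExampleThetaAk}).

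Next I would define multilinear alternating polynomials $f_1,\ldots,f_{10}$ (one per block), analogous to those in the proof of Lemma~\ref{LemmaAltT1}, and set
\[
f = (f_1 f_3)^{\beta_3}(f_1 f_5)^{\beta_5}(f_1 f_7)^{\beta_7}(f_1 f_9)^{\beta_9}\, f_2^{\beta_2} f_4^{\beta_4} f_6^{\beta_6} f_8^{\beta_8} f_{10}^{\beta_{10}} \in P_n^{(FT_3)^*},
\]
where different copies of each $f_i$ use disjoint variables. Here $f_1$ must yield a non-zero product in $A$ when its six variables are substituted by the elements of $\mathcal B$ in the order prescribed by the tableau; the pairing polynomials $f_3, f_5, f_7, f_9$ are designed so that when juxtaposed with $f_1$ the net $\theta$-contribution is zero (each carries $(e_{21}, 0)$ in one slot); and $f_2, f_4, f_6, f_8, f_{10}$ are individually chosen with $\sum \theta = 0$. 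I would then display a tableau $\tau$ of shape $\lambda$, analogous to Figure~\ref{FigureTauT1}, in which every cell of a row within a given block carries the same basis element, chosen so that $f$ evaluates to a non-zero element of $A$ on $\tau$. The verification that $e_{T_\lambda}f \not\equiv 0$ then follows exactly as in Lemma~\ref{LemmaAltT1}: the column-alternation $b_{T_\lambda}f$ is a non-zero scalar multiple of $f$, and every non-identity term arising from the row-symmetrization $a_{T_\lambda}$ either mixes variables from rows whose substitutions lie in different graded components (killed by the action of some $h_t \in (FT_3)^*$) or else forces a column polynomial to vanish via the annihilation $IM_2(F) = I^2 = 0$.

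The main obstacle will be the explicit construction of $f_1$ compatible with the one-sided annihilation. Because the $I$-part of any product in $A$ is determined solely by the last factor (since $IM_2(F)=0$ and $I^2=0$ force $(a_1,x_1)\cdots(a_k,x_k) = (a_1\cdots a_k,\, a_1\cdots a_{k-1}\,x_k)$), the monomials of $f_1$ contributing to non-vanishing must end in an $A^{(e_2)}$-element while still giving a non-zero matrix-unit product on the $M_2(F)$-part; the two $A^{(e_2)}$-basis elements $(e_{12},e_{12})$ and $(e_{22},e_{22})$ must therefore be placed carefully within each $f_i$. This removes the symmetry present in the $T_1$ case of Lemma~\ref{LemmaAltT1}, where both $M_2(F)$ and $\UT_2(F)$ are unital associative algebras with a two-sided multiplicative structure. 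Once these polynomials and $\tau$ are in place, the rest of the argument is mechanical and proceeds exactly as in the proof of Lemma~\ref{LemmaAltT1}.
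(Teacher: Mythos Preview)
Your outline matches the paper's proof: the same $\beta$-partition of the columns, the same device of pairing each height-$6$ column with a shorter ``compensating'' column, and the same endgame for showing $e_{T_\lambda}f \not\equiv 0$ via the projectors $h_{e_1}, h_{e_2}$. Two small corrections are worth noting. First, the paper writes the paired blocks in the order $(f_j f_1)^{\beta_j}$ rather than your $(f_1 f_j)^{\beta_j}$; either can be made to work once the $f_i$ and the substitution tableau $\tau$ are chosen accordingly. Second, your stated ``main obstacle'' is a non-issue: since the $M_2(F)$-component of a product in $A$ is simply the product of first components, non-vanishing of $f$ is detected entirely by the $M_2(F)$-projection, and there is no requirement that $f_1$ end in an $A^{(e_2)}$-element (the paper's $f_1$ in fact ends with $x^{h_{e_1}}$). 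Likewise, the unwanted row-symmetrization terms are killed purely by the projectors $h_{e_1}, h_{e_2}$ landing on elements of the wrong graded component, not by the annihilation relations $IM_2(F)=I^2=0$; those relations affect only the $I$-coordinate, which is irrelevant to non-vanishing.
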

\begin{proof}
It is sufficient to show that for some $f\in P_n$ and some $T_\lambda$
we have $e_{T_\lambda}f \not\equiv 0$ on $A$.

Let $\beta_2=\lambda_5-\lambda_6$. Fix numbers $\beta_3,\ldots,\beta_{10} \geqslant 0$
such that $\beta_3+\beta_5+\beta_7+\beta_9 = \lambda_6$,\quad
$\beta_3+\beta_4=\lambda_4-\lambda_5$,\quad $\beta_5+\beta_6=\lambda_3-\lambda_4$,\quad
$\beta_7+\beta_8=\lambda_2-\lambda_3$, and $\beta_9+\beta_{10}=\lambda_1-\lambda_2$.
 In other words, we have
$$D_\lambda=\begin{array}{|c|c|c|c|c|c|c|c|c|c|}
\multicolumn{1}{c}{\lambda_6} & \multicolumn{1}{c}{\beta_2} & \multicolumn{1}{c}{\beta_3} & \multicolumn{1}{c}{\beta_4} & \multicolumn{1}{c}{\beta_5} & \multicolumn{1}{c}{\beta_6} & \multicolumn{1}{c}{\beta_7} & \multicolumn{1}{c}{\beta_8} & \multicolumn{1}{c}{\beta_9} &
\multicolumn{1}{c}{\beta_{10}} \\
\hline
 \ldots & \ldots & \ldots & \ldots & \ldots & \ldots & \ldots & \ldots & \ldots & \ldots \\
 \cline{1-10}
 \ldots & \ldots & \ldots & \ldots & \ldots & \ldots & \ldots & \ldots \\
 \cline{1-8}
 \ldots & \ldots & \ldots & \ldots & \ldots & \ldots \\
 \cline{1-6}
 \ldots & \ldots & \ldots & \ldots \\
 \cline{1-4}
 \ldots & \ldots \\
 \cline{1-2}
 \ldots  \\
 \cline{1-1}
\end{array}.$$
(Here $\beta_i$ denotes the number of columns in each block of columns.)

We fix some Young tableau $T_\lambda$ of the shape $\lambda$ filled in with the numbers
from $1$ to $n$.
Like in Lemma~\ref{LemmaAltT1}, for each column of $T_\lambda$ we define a multilinear alternating polynomial depending on the variables with the indexes from the column. For shortness, we denote the polynomials
corresponding to different columns in the $i$th block by the same letter $f_i$.
By $(i_1, \ldots, i_\ell)$ we denote the $\ell$-tuple of numbers from a column (from up to down).
By $S\lbrace i_1, \ldots, i_\ell\rbrace$ we denote the symmetric group on $i_1, \ldots, i_\ell$.
We define $$f_1 := \sum_{\sigma\in S\lbrace i_1, \ldots, i_6\rbrace} (\sign \sigma)
x^{h_{e_1}}_{\sigma(i_3)}
x^{h_{e_2}}_{\sigma(i_5)}
x^{h_{e_1}}_{\sigma(i_4)}
x^{h_{e_2}}_{\sigma(i_2)}
x^{h_{e_1}}_{\sigma(i_1)}
x^{h_{e_1}}_{\sigma(i_6)}
,$$ 
$$f_2 := \sum_{\sigma\in S\lbrace i_1, \ldots, i_5\rbrace} (\sign \sigma)
x^{h_{e_1}}_{\sigma(i_1)}
x^{h_{e_1}}_{\sigma(i_3)}
x^{h_{e_2}}_{\sigma(i_5)}
x^{h_{e_2}}_{\sigma(i_2)}
x^{h_{e_1}}_{\sigma(i_4)}
,$$
$$f_3 := \sum_{\sigma\in S\lbrace i_1, \ldots, i_4\rbrace} (\sign \sigma)
x^{h_{e_1}}_{\sigma(i_4)}
x^{h_{e_2}}_{\sigma(i_2)}
x^{h_{e_1}}_{\sigma(i_1)}
x^{h_{e_1}}_{\sigma(i_3)}
,$$ $$f_4 := \sum_{\sigma\in S\lbrace i_1, \ldots, i_4\rbrace} (\sign \sigma)
x^{h_{e_1}}_{\sigma(i_1)}
x^{h_{e_1}}_{\sigma(i_3)}
x^{h_{e_2}}_{\sigma(i_4)}
x^{h_{e_2}}_{\sigma(i_2)}
,$$ $$f_5 := \sum_{\sigma\in S\lbrace i_1, i_2, i_3\rbrace} (\sign \sigma)
x^{h_{e_2}}_{\sigma(i_2)}
x^{h_{e_1}}_{\sigma(i_1)}
x^{h_{e_1}}_{\sigma(i_3)}
,\qquad f_6 := \sum_{\sigma\in S\lbrace i_1, i_2, i_3\rbrace} (\sign \sigma)
x^{h_{e_2}}_{\sigma(i_2)}
x^{h_{e_1}}_{\sigma(i_1)}
x^{h_{e_2}}_{\sigma(i_3)}
,$$ $$f_7 := \sum_{\sigma\in S\lbrace i_1, i_2\rbrace} (\sign \sigma)
x^{h_{e_2}}_{\sigma(i_2)}
x^{h_{e_1}}_{\sigma(i_1)}
,\qquad f_8 := \sum_{\sigma\in S\lbrace i_1, i_2\rbrace} (\sign \sigma)
x^{h_{e_1}}_{\sigma(i_1)}
x^{h_{e_1}}_{\sigma(i_2)}
,$$ $$f_{9} := x^{h_{e_1}}_{i_1},\qquad f_{10} := x^{h_{e_2}}_{i_1}.$$

Define the polynomial
$$f=(f_3 f_1)^{\beta_3}(f_5 f_1)^{\beta_5}(f_7 f_1)^{\beta_7}(f_9 f_1)^{\beta_9}
f_2^{\beta_2} f_4^{\beta_4}f_6^{\beta_6}f_8^{\beta_8}f_{10}^{\beta_{10}} \in P_n.$$
 As we have already mentioned, here different copies of $f_i$ depend on different variables.

The copies of $f_1$ are alternating polynomials
of degree $6$ corresponding to the first $\lambda_6$ columns of height $6$.

The copies of $f_2$ are alternating polynomials of degree $5$ corresponding to
the next $\beta_2$ columns of height $5$.

\ldots

The copies of $f_{10}$ are polynomials of degree $1$ with the indexes from the last
$\beta_{10}$ columns of height $1$.

We claim that $e_{T_\lambda}f \not\equiv 0$. In order to verify this, we fill $D_\lambda$ with specific homogeneous elements and denote the tableau obtained
by $\tau$. (See Figure~\ref{FigureTauT3}.)
\begin{landscape}

\begin{figure}\caption{Substitution for the variables of $e_{T_\lambda}f$, grading semigroup $T_3$}\label{FigureTauT3}
$$\tau=\begin{array}{|c|c|c|c|c|c|c|c|c|c|}
\multicolumn{1}{c}{\lambda_7} & \multicolumn{1}{c}{\beta_2} & \multicolumn{1}{c}{\beta_3} & \multicolumn{1}{c}{\beta_4} & \multicolumn{1}{c}{\beta_5} & \multicolumn{1}{c}{\beta_6} & \multicolumn{1}{c}{\beta_7} & \multicolumn{1}{c}{\beta_8} & \multicolumn{1}{c}{\beta_9} &
\multicolumn{1}{c}{\beta_{10}} \\
\hline
 (e_{21},0) & (e_{21},0) & (e_{21},0) & (e_{21},0) & (e_{21},0) & (e_{21},0) & (e_{21},0) & (e_{21},0) & (e_{21},0) & (e_{22}, e_{22}) \\
 \cline{1-10}
(e_{22}, e_{22}) & (e_{22}, e_{22}) & (e_{22}, e_{22}) & (e_{22}, e_{22}) & (e_{22}, e_{22}) & (e_{22}, e_{22}) & (e_{22}, e_{22}) & (e_{12}, 0) \\
 \cline{1-8}
 (e_{11}, 0) & (e_{11}, 0) & (e_{11}, 0) & (e_{11}, 0) & (e_{11}, 0) & (e_{12}, e_{12})  \\
 \cline{1-6}
 (e_{22}, 0) & (e_{22}, 0) & (e_{22}, 0) & (e_{12}, e_{12}) \\
 \cline{1-4}
 (e_{12}, e_{12}) & (e_{12}, e_{12}) \\
 \cline{1-2}
 (e_{12}, 0)  \\
 \cline{1-1}
\end{array}$$ 
\end{figure}

 (Here in the $i$th block we have $\beta_i$ columns with the same values
in all cells of a row. For shortness, we depict each value for each block only once.
The tableau $\tau$ is still of the shape $\lambda$.)

\end{landscape}

Now for each variable we substitute the element from the corresponding box in $\tau$.
Note that $f$ does not vanish under this substitution.

Recall that $e_{T_\lambda} = a_{T_\lambda}b_{T_\lambda}$
where $a_{T_\lambda}$ is the symmetrization in the variables of each row
and $b_{T_\lambda}$ is the alternation in the variables of each column.
Since all $f_i$ are alternating polynomials, $b_{T_\lambda} f$
is a nonzero multiple of $f$.

Two sets of variables correspond to the second row of $T_\lambda$.
For the variables of the first group we substitute $(e_{22},e_{22})  \in A^{(e_2)}$,
for the second one, we substitute $(e_{12},0) \in A^{(e_1)}$.
Thus if an item in $a_{T_\lambda}$ mixes variables from these two groups,
at least one variable from the second group, i.e. in $f_8$, is replaced with a variable form the first one. However, $f_8$ vanishes if at least one variable of it is replaced with an element of $A^{(e_2)}$
since $h_{e_1}$ is applied for both variables of $f_8$.
Thus all items in $a_{T_\lambda}b_{T_\lambda}f$ where variables from these two groups are mixed, vanish.

Therefore, if an item in $a_{T_\lambda}$ replaces a variable from the first three columns with
a variable with a different value from the tableau $\tau$, we have too many elements from
 $A^{(e_2)}$ substituted for the variables of $f_1$, $f_2$, and $f_3$ and the result is zero in virtue of the action of $h_{e_1}$.
 Therefore all items in $a_{T_\lambda}b_{T_\lambda}f$ where variables from the first three columns having different values are mixed, vanish. We continue this procedure and finally show that if an item in $a_{T_\lambda}$ does not
 stabilize the sets of variables with the same values from the tableau $\tau$, the corresponding
 item in $a_{T_\lambda}b_{T_\lambda}f$ vanishes.
 Hence the value of $a_{T_\lambda}b_{T_\lambda}f$ is a nonzero multiple of the value of $b_{T_\lambda}f$, i.e. is nonzero. The lemma is proved.
\end{proof}
\begin{proof}[Proof of Theorem~\ref{TheoremT3GradFractPI}.]
We use Lemmas~\ref{LemmaTExampleLower} and~\ref{LemmaAltT3}.
\end{proof}

\section{Positive results on the analog of Amitsur's conjecture for polynomial $T$-graded identities}

\begin{theorem}\label{TheoremTCancelAmitsur}
Let $A$ be a finite dimensional non-nilpotent $T$-graded associative algebra with $1$
over a field $F$ of characteristic $0$ for some cancellative semigroup $T$. Then
there exist constants $C_1, C_2 > 0$, $r_1, r_2 \in \mathbb R$, $d\in\mathbb N$,
such that $$C_1 n^{r_1} d^n \leqslant c^{T\text{-}\mathrm{gr}}_n(A) \leqslant C_2 n^{r_2} d^n\text{
for all }n \in \mathbb N.$$
\end{theorem}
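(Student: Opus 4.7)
The plan is to reduce to Theorem~\ref{TheoremMainTGrAssoc} by verifying its hypotheses: that the base field may be taken algebraically closed, that the Jacobson radical is $T$-graded, and that $A/J$ admits the required graded Wedderburn decomposition into graded-simple ideals.

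First, since $F$ has characteristic $0$, the graded codimensions are invariant under extension of scalars (by the standard multilinearization argument, as in~\cite[Theorem~1.3.8]{ZaiGia} adapted to the graded setting), so we may replace $F$ by its algebraic closure $\bar F$ and assume $F$ is algebraically closed from the outset. Next, the Jacobson radical $J := J(A)$ is a graded ideal by Kelarev and Plant's theorem~\cite[Corollary~4.1]{KelarevBook} applied to the cancellative semigroup $T$; the introduction already flags this as the intended tool. The quotient $A/J$ inherits a $T$-grading from $A$ (since $J$ is graded) and is semisimple by definition of the Jacobson radical, so Proposition~\ref{PropositionTCancelWedderburn} yields a decomposition
$$A/J = B_1 \oplus \ldots \oplus B_q \text{ (direct sum of }T\text{-graded ideals)}$$
in which each $B_i$ is $T$-graded-simple. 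Finally, since $F$ is algebraically closed of characteristic $0$, the semisimple algebra $A/J$ is separable, and the ordinary Wedderburn~--- Mal'cev theorem furnishes a (not necessarily graded) homomorphism $\varkappa \colon A/J \to A$ with $\pi \varkappa = \id_{A/J}$.

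All hypotheses of Theorem~\ref{TheoremMainTGrAssoc} are now in place, giving $C_1 n^{r_1} d^n \leqslant c^{T\text{-}\mathrm{gr}}_n(A) \leqslant C_2 n^{r_2} d^n$ for some $d \in \mathbb Z_+$ computed as the maximum dimension of certain alternating products of the $B_i$. It remains to show $d \geqslant 1$. Since $A$ is non-nilpotent while $J$ is nilpotent (being the Jacobson radical of a finite-dimensional algebra), $A/J \ne 0$ and $q \geqslant 1$. Taking $r=1$ with the single block $B_1$, the condition in the definition of $d$ reduces to $(FT)^*\varkappa(B_1) \supseteq \varkappa(B_1) \ne 0$, which holds since $\varkappa$ is injective and $B_1 \ne 0$. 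Hence $d \geqslant \dim B_1 \geqslant 1$ and $d \in \mathbb N$, as required. The only substantive external ingredient is the Kelarev~--- Plant theorem; everything else is an assembly of results already established earlier in the paper, so I do not expect any genuine obstacle.
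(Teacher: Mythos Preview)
Your proof is correct and follows essentially the same route as the paper's: reduce to an algebraically closed base field, invoke Kelarev--Plant~\cite[Corollary~4.1]{KelarevBook} for gradedness of $J(A)$, apply Proposition~\ref{PropositionTCancelWedderburn} to decompose $A/J$ into graded-simple ideals, and then appeal to Theorem~\ref{TheoremMainTGrAssoc}. Your explicit verification that $d\geqslant 1$ and your remark on the existence of $\varkappa$ are harmless elaborations (the paper handles the latter in the Remark following Theorem~\ref{TheoremMainTGrAssoc} and leaves the former implicit in the non-nilpotency hypothesis); the only cosmetic discrepancy is that the paper cites~\cite[Theorem~4.1.9]{ZaiGia} rather than~\cite[Theorem~1.3.8]{ZaiGia} for the base-change step.
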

\begin{corollary}
The graded analog of Amitsur's conjecture holds for such codimensions.
\end{corollary}
\begin{proof}[Proof of Theorem~\ref{TheoremTCancelAmitsur}.]
Note that graded codimensions do not change upon an extension of the base field.
The proof is analogous to the case of ordinary codimensions~\cite[Theorem~4.1.9]{ZaiGia}.
Hence we may assume $F$ to be algebraically closed.

By~\cite[Corollary 4.1]{KelarevBook}, $J(A)$ is a graded ideal.
By Proposition~\ref{PropositionTCancelWedderburn},
 $A/J(A)$ is the sum of graded ideals
that are $T$-graded-simple algebras. Now we apply Theorem~\ref{TheoremMainTGrAssoc}.
 \end{proof}

\begin{theorem}\label{TheoremTIdemAmitsur}
Let $A$ be a finite dimensional non-nilpotent $T$-graded associative algebra with $1$
over a field $F$ of characteristic $0$ for some left or right zero band $T$. Then
there exist constants $C_1, C_2 > 0$, $r_1, r_2 \in \mathbb R$,
such that $$C_1 n^{r_1} d^n \leqslant c^{T\text{-}\mathrm{gr}}_n(A) \leqslant C_2 n^{r_2} d^n\text{
for all }n \in \mathbb N$$
where $d=\PIexp(A)$ is the ordinary PI-exponent of $A$.
\end{theorem}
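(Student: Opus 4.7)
The plan is to mirror the proof of Theorem~\ref{TheoremTCancelAmitsur}, but now exploiting the stronger structure available in the zero-band case to identify the parameter $d$ furnished by Theorem~\ref{TheoremMainTGrAssoc} with the \emph{ordinary} PI-exponent. As in the proof of Theorem~\ref{TheoremTCancelAmitsur}, graded codimensions are stable under extension of scalars, so I would immediately pass to the algebraic closure and assume $F$ algebraically closed. By Proposition~\ref{PropositionTIdemGradedIdeals} the Jacobson radical $J := J(A)$ is a graded ideal, so the hypothesis of Theorem~\ref{TheoremMainTGrAssoc} on gradedness of $J$ is met.

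The crucial step is to choose the splitting $\varkappa \colon A/J \to A$ to be a \emph{graded} homomorphism. This is legitimate because $F$ is perfect of characteristic $0$, so $A/J$ is separable and Theorem~\ref{TheoremTIdemGradedWeddMalcev} applies, producing a graded maximal semisimple subalgebra $B \subset A$ with $A = B \oplus J$; take $\varkappa \colon A/J \xrightarrow{\sim} B \hookrightarrow A$ to be the inverse of $\pi|_B$. By Proposition~\ref{PropositionTIdemWedderburn}, decompose $A/J = B_1 \oplus \ldots \oplus B_q$ as a direct sum of graded ideals each of which is a simple ordinary algebra. Since each $\varkappa(B_i)$ is a graded subspace of $A$, the $(FT)^*$-action acts diagonally on its graded components and in particular preserves $\varkappa(B_i)$, so $(FT)^*\varkappa(B_i) = \varkappa(B_i)$.

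With that identification, the parameter $d$ from Theorem~\ref{TheoremMainTGrAssoc} collapses to
\begin{equation*}
d = \max\bigl\{\dim(B_{i_1}\oplus\ldots\oplus B_{i_r}) \mid \varkappa(B_{i_1})A^+\varkappa(B_{i_2})A^+\cdots A^+\varkappa(B_{i_r}) \ne 0\bigr\},
\end{equation*}
which is precisely the Giambruno--Zaicev characterization of $\PIexp(A)$ applied to the ordinary Wedderburn--Mal'cev decomposition $A = \varkappa(A/J) \oplus J$. Hence $d = \PIexp(A)$, and Theorem~\ref{TheoremMainTGrAssoc} delivers the two-sided bound claimed. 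The only subtle step I expect is the comparison of the two $d$'s: one must be sure that absorbing the $(FT)^*$-action into $\varkappa(B_i)$ does not enlarge the family of admissible tuples beyond what Giambruno--Zaicev uses, and for this the gradedness of $\varkappa$ (hence the fact that the $(FT)^*$-action fixes each $\varkappa(B_i)$ setwise) is essential --- this is exactly what fails in the general semigroup case and what makes the zero-band hypothesis indispensable.
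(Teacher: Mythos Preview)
Your proposal is correct and follows essentially the same route as the paper: pass to the algebraic closure, use Proposition~\ref{PropositionTIdemGradedIdeals} and Proposition~\ref{PropositionTIdemWedderburn} to verify the hypotheses of Theorem~\ref{TheoremMainTGrAssoc}, invoke Theorem~\ref{TheoremTIdemGradedWeddMalcev} to make $\varkappa$ graded so that $(FT)^*\varkappa(B_i)=\varkappa(B_i)$, and then identify the resulting $d$ with the Giambruno--Zaicev formula for $\PIexp(A)$. The paper also notes at the outset that one may take $T$ finite (since $A$ is finite dimensional and any subset of a zero band is a sub-semigroup), and it spells out the passage from the $A^+$-formula to the $J(A)$-formula, but these are minor bookkeeping points your argument implicitly handles.
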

\begin{corollary}
The graded analog of Amitsur's conjecture holds for such codimensions.
\end{corollary}
\begin{proof}[Proof of Theorem~\ref{TheoremTIdemAmitsur}.]
Since $A$ is finite dimensional, we may assume that $T$ is finite. Again, without loss of generality,
we may assume $F$ to be algebraically closed.

By Propositions~\ref{PropositionTIdemGradedIdeals} and~\ref{PropositionTIdemWedderburn},
the Jacobson radical of $A$ is a graded ideal and $A/J(A)$ is the sum of graded ideals
that are simple algebras. Therefore, by Theorem~\ref{TheoremMainTGrAssoc} there exists
an integer $\PIexp^{T\text{-}\mathrm{gr}}(A)$. By Theorem~\ref{TheoremTIdemGradedWeddMalcev},
we can choose a graded maximal semisimple subalgebra $B$ such that $A=B\oplus J(A)$ (direct sum of graded subspaces). Then we may define an embedding $\varkappa \colon A/J \mathrel{\widetilde{\rightarrow}} B$ (see Theorem~\ref{TheoremMainTGrAssoc})
to be graded. Let $B=B_1 \oplus \ldots \oplus B_s$ (direct sum of ideals) for some simple algebras $B_i$.
Then by Proposition~\ref{PropositionTIdemGradedIdeals} the ideals $B_i$ are graded,
$A/J=\varkappa^{-1}(B_1) \oplus \ldots \oplus\varkappa^{-1}(B_s)$  (direct sum of graded ideals), and
\begin{equation*}\begin{split}\PIexp^{T\text{-}\mathrm{gr}}(A) = \max\dim\left( B_{i_1}\oplus B_{i_2} \oplus \ldots \oplus B_{i_r}
 \mathbin{\Bigl|}  r \geqslant 1,\right. \\ \left. ((FT)^*B_{i_1})A^+ \,((FT)^*B_{i_2}) A^+ \ldots ((FT)^*B_{i_{r-1}}) A^+\,((FT)^*B_{i_r})\ne 0\right)=\\ \max\dim\left( B_{i_1}\oplus B_{i_2} \oplus \ldots \oplus B_{i_r}
 \mathbin{\Bigl|}  r \geqslant 1,\ B_{i_1} A^+ \,B_{i_2} A^+ \ldots B_{i_{r-1}} A^+\,B_{i_r}\ne 0\right)=\\ \max\dim\left( B_{i_1}\oplus B_{i_2} \oplus \ldots \oplus B_{i_r}
 \mathbin{\Bigl|}  r \geqslant 1,\ B_{i_1} J(A) \,B_{i_2} J(A) \ldots B_{i_{r-1}} J(A)\,B_{i_r}\ne 0\right)=\PIexp(A)\end{split}\end{equation*}
 since $B_i$ are simple as ordinary algebras.
 \end{proof}
 \begin{remark}
 The equality $\PIexp^{T\text{-}\mathrm{gr}}(A)=\PIexp(A)$ does not imply the equality
 of codimensions. Indeed, let  $k\in\mathbb N$, $k\geqslant 2$, $A=M_k(F)$,
 $T=\lbrace t_1, \ldots, t_k\rbrace$ where $t_i t_\ell = t_\ell$
 for all $1\leqslant i,\ell \leqslant k$, $A^{(t_i)}=\langle e_{1 i},\ldots, e_{k i}\rangle_F$.
  Then $x_1^{(t_i)}$
 are linearly independent modulo $\Id^{T\text{-}\mathrm{gr}}(A)$.
 In order to check this, it is sufficient to substitute $x_1^{(t_i)} = e_{ii}$.
 Thus $c_1(A)=1 < c_1^{T\text{-}\mathrm{gr}}(A)=k$.
 \end{remark}

%
%
%
%
%
%
%
%
%
%
%

\section*{Acknowledgements}

I am grateful to E.~Jespers, M.\,V.~Zaicev, and E.~Iwaki for helpful discussions.

\end{document}